\documentclass{amsart}
\usepackage[latin1]{inputenc}
\usepackage{amssymb}
\usepackage{amsmath}
\usepackage{enumerate}
\usepackage{epsfig,color}

\usepackage{color}

\usepackage{longtable,graphics,multirow,ulem}
\usepackage{tikz}
\usepackage{pgf,tikz}
\usetikzlibrary{arrows}
\usepackage[all]{xy}

\newcounter{nootje}
\setcounter{nootje}{1}

\newtheorem{theorem}{Theorem}[section]
\newtheorem{lemma}[theorem]{Lemma}
\newtheorem{proposition}[theorem]{Proposition}
\newtheorem{corollary}[theorem]{Corollary}

\newtheorem{definition}[theorem]{Definition}

\newtheorem{rem}[theorem]{Remark}

\numberwithin{equation}{section}
\newcommand{\rk}{\mbox{rank}}

\newcommand{\ra}{\rightarrow}

\newcommand{\Z}{\mathbb{Z}}

\newcommand{\Aut}{\mathrm{Aut}}

\makeatletter
\def\blfootnote{\xdef\@thefnmark{}\@footnotetext}

\author{Alice Garbagnati}
\address{Dipartamento di Matematica, Univ. Statale di Milano, Milan, Italy}
\email{alice.garbagnati@unimi.it}
\urladdr{ https://sites.google.com/site/alicegarbagnati/}
\author{Cec\'ilia Salgado}
\address{Instituto de Matem\'atica, Univ. Federal do Rio de Janeiro, Rio de Janeiro, Brazil}
\email{salgado@im.ufrj.br}
\urladdr{ http://www.im.ufrj.br/\~{}salgado}

\title[Linear systems on RES and elliptic fibrations on K3]{Linear systems on rational elliptic surfaces and elliptic fibrations on K3 surfaces}
\begin{document}

\subjclass[2010]{Primary 14J26, 14J27, 14J28.}
\keywords{Elliptic fibrations, Rational elliptic surfaces, K3 surfaces, Double covers.\\
The first author is partially supported by FIRB 2012 ``Moduli Spaces and their Applications".\\
The second author is partially supported by Cnpq- Brasil grant nb: 305743/2014-7 and the grant \textit{For Women in Science}, L' \'Oreal-Unesco-ABC}

\maketitle

\begin{abstract}
We consider K3 surfaces which are double cover of rational elliptic surfaces. The former are endowed with a natural elliptic fibration, which is induced by the latter. There are also other elliptic fibrations on such K3 surfaces, which are necessarily induced by special linear systems on the rational elliptic surfaces. We describe these linear systems. In particular, we observe that every conic bundle on the rational surface induces a genus 1 fibration on the K3 surface and we classify the singular fibers of the genus 1 fibration on the K3 surface it terms of singular fibers and special curves on the conic bundle on the rational surface.
\end{abstract}

\section{Introduction}

If one considers the classification of surfaces according to their Kodaira dimension, one finds examples of surfaces endowed with an elliptic fibration in the classes with $-\infty \leq \kappa\leq 1$. Nevertheless, thanks to their rich geometry, K3 surfaces are the only ones that might admit more than one elliptic fibration which is not of product type (\cite[Lemma 12.18]{SS}). For example, K3 surfaces obtained by a base change of a rational elliptic surface have in general more than one elliptic fibration on it. It is therefore natural to enquire about the classification of elliptic fibrations on elliptic K3 surfaces and, in case the K3 surface $X$ is obtained by a rational elliptic surface $R$, to investigate the relations among elliptic fibrations on $X$ and linear systems on $R$. 

In order to obtain the classification of elliptic fibrations on K3 surfaces, there is a lattice theoretic method (introduced by Nishiyama in \cite{Nis}) which allows one to describe the frame lattice of the fibration, i.e., the lattice spanned by the classes of the non-trivial components of the reducible fibers in the N\'eron-Severi lattice. 
The second, more geometric, method to classify elliptic fibrations on certain K3 surfaces was introduced by Oguiso in \cite{Oguiso} and later applied by \cite{Kloosterman} and \cite{CG}. It is strictly related with the presence of an involution $\iota$ on the K3 surface $X$ and so with the geometry of a different surface, $X/\iota$, which is the quotient of the K3 surface by this involution. This is our starting point, since we describe elliptic fibrations on the K3 surface $X$ which are induced by linear systems on the rational elliptic surface $X/\iota$. In the articles \cite{Oguiso}, \cite{Kloosterman} and \cite{CG} the involution $\iota$ acts trivially on the N\'eron--Severi group of $X$ and it fixes rational curves on $X$. Here we avoid both these hypothesis requiring only that the curves of highest genus fixed by $\iota$ has genus at most 1. This condition implies that $X$ is obtained as double cover of a rational elliptic surface $R$ (by a base change of order 2) and our purpose is to identify the linear systems on $R$ which induce elliptic fibrations on $X$.\\

Given 9 points in the intersection of two plane cubics, if the generic member of the pencil spanned by these is smooth, then the blow up $R$ of $\mathbb{P}^2$ in such points is naturally endowed with a (relatively minimal) elliptic fibration $\mathcal{E}_R:R\ra \mathbb{P}^1$. Moreover, this fibration admits a section, namely the last exceptional divisor of the series of blow-ups. An application of the canonical bundle formula assures that this is the unique elliptic fibration on $R$ (any fiber of an elliptic fibration in $R$ is algebraically equivalent to the anti-canonical divisor). On the other hand $R$ could admit several other fibrations $R\ra \mathbb{P}^1$, whose generic fibers are rational curves. 

Given the elliptic fibration described above $\mathcal{E}_R:R\rightarrow \mathbb{P}^1$ and two smooth fibers of this fibration, say $(F_R)_{b_1}$,$(F_R)_{b_2}$, the double cover of $R$ branched over $(F_R)_{b_1}$ and $(F_R)_{b_2}$ is a smooth surface, which is a K3 surface, naturally endowed with an elliptic fibration $\mathcal{E}_X:X\ra \mathbb{P}^1$, induced by $\mathcal{E}_R$. This construction can be generalized admitting the possibility that $(F_R)_{b_1}$ and $(F_R)_{b_2}$ are singular (possibly reducible) fibers without multiple components. In this case the double cover of $R$ branched on $(F_R)_{b_1}$ and $(F_R)_{b_2}$ is no longer smooth, but its minimal desingularization is again a K3 surface endowed with an elliptic fibration $\mathcal{E}_X$.

In contrast to what happens for $R$, $X$ admits more than one elliptic fibration, and only one of them is induced by $\mathcal{E}_R$. Since there is a $2:1$ map $X\ra R$, the rational fibrations on $R$, i.e. the conic bundles on $R$, induce fibrations on $X$. The Hurwitz formula applied to the restriction of the double cover map $X\ra R$ to the generic fiber of each conic bundle on $R$ assures that the fibrations in rational curves on $R$ induce fibrations in genus one curves on $X$, which are often endowed with a section. 
Hence a part of the elliptic fibrations on $X$ which are not induced by $\mathcal{E}_R$ are induced by conic bundles on $R$. 
Since all the conic bundles on $R$ induce elliptic fibrations on $X$, it is natural to relate the geometry of the conic bundles with the one of the induced elliptic fibrations. Theorems \ref{theo: reducible fibers induced by conic  bundle} and \ref{theo: extra singular fibers conic bundles} contain our main results on this  argument: for each elliptic fibration on $X$ induced by a conic bundle, the singular fibers are induced either by reducible fibers of the conic bundle or by smooth fibers of the conic bundle which have peculiar intersection properties with $(F_R)_{b_1}\cup (F_R)_{b_2}$ (i.e. they intersect at least one of the fibers $(F_R)_{b_1}$ $(F_R)_{b_2}$ in one point with multiplicity 2).
The singular fibers of the first type are classified in Theorem \ref{theo: reducible fibers induced by conic  bundle} and the singular fibers of the latter type are described in Theorem \ref{theo: extra singular fibers conic bundles}.

We saw above two constructions which produce elliptic fibrations on $X$: they can be induced by the elliptic fibration $\mathcal{E}_R$ or by conic bundles on $R$; but there might be also several other elliptic fibrations on $X$ which are neither induced by conic bundles nor by $\mathcal{E}_R$. We determine what are the conditions on the fibers of $R$ which assure that all the elliptic fibrations on $X$ are induced either by $\mathcal{E}_R$ or by conic bundles on $R$ (see Corollary \ref{corollary: Fb1Fb2 smooth+iota is on NS}) and we describe what are the other admissible linear systems on $R$ which can induce elliptic fibrations on $X$. Our main result in this sense is Theorem  \ref{theor: pencils induced on R by elliptic  fibrations on X} where we associate the elliptic fibrations on $X$ to three different types of linear systems on $R$: the (generalized) conic bundles, the splitting genus 1 pencils, and some special non-complete linear systems. The geometrical meaning of this distinction comes directly from the different actions of the cover involution $\iota$ on $X$. Indeed, by construction, $X$ is a double cover of $R$ and the cover involution $\iota$ on $X$ is non-symplectic. Given an elliptic fibration on $X$ there are three possibilities: $\iota$ acts as the elliptic involution on each fiber; $\iota$ acts as an involution on the basis; $\iota$ does not preserve the fibration. In the first case the elliptic fibration on $X$ is induced by a rational fibration on a certain blow up $\widetilde{R}$ of $R$ (i.e. by a generalized conic bundle). In the second case the elliptic fibration on $X$ is induced by a pencil of curves of genus 1 on $R$ which is not necessarily a fibration on $R$. We observe that if $X$ admits a fibration of this type, then $X$ can be realized also as double cover of another rational elliptic surface $R'$. In the third case, since the fibration is not preserved by $\iota$, the fibration is not associated to a complete linear system on $R$, but in any case one can describe some properties of the (non-complete) linear system obtained pushing down the elliptic fibration on $X$ to $R$. 

In the last part of the paper we analyze in detail some examples: in Section \ref{subsec: example K3 double cover R211} we present a well-known K3 surface for which we prove that all the elliptic fibrations are either induced by $\mathcal{E}_R$, or by conic bundles. In Section \ref{sec: example} we chose a rational elliptic surface $R$ and a K3 surface $X$, which is its double cover, and we describe on $X$ an elliptic fibration for each admissible type, indeed we describe an elliptic fibration on $X$ induced by: $\mathcal{E}_R$ (Section \ref{subsec: example R and X}); a conic bundle (Section \ref{subsec: example conic bunldes}); a generalized conic bundle (Section \ref{subsec: examples generalized  conic bundle});  a splitting genus 1 pencil (Section \ref{subsec: example splitting genus 1  pencil}); a non-complete linear system (Section \ref{subsec:  example type 3) fibration}).

{\it {\bf Acknowledgments}. 
The first author would like to thank Bert van Geemen, while the second thanks Jaap Top and Mathias Schuett for many interesting discussions. The authors would also like to acknowledge the organization of Women in Numbers Europe that was held in Luminy. The first discussions towards the ideas presented in this article started at that occasion.}

\section{Definitions and notation}

Let $k$ be an algebraically closed field of characteristic zero. 

\begin{definition} Let $X$ be a smooth projective algebraic surface. A surjective map $\mathcal{E}_X:X\ra C$ from $X$ to a smooth projective algebraic curve $C$ is called a genus 1 fibration if the generic fiber is a smooth curve of genus 1. A genus 1 fibration is called an elliptic fibration if it admits at least one section $s:C\ra  X$. 
An elliptic fibration is said to be relative minimal if there are no $(-1)$-curves which are components of fibers.
\end{definition}
In the following we always assume that the elliptic fibrations are relatively minimal, unless explicitly stated.

\begin{definition} A rational elliptic surface (RES) $R$ is a smooth rational surface endowed with an elliptic fibration. It is isomorphic to the blow up of $\mathbb{P}^2$ in 9 (possibly infinitely near)
points $P_1$, $P_2,\ldots,P_9$ such that the pencil of cubics
through these points contains at least one (and therefore infinitely many)
smooth cubic. The elliptic fibration on it is given by
$\mathcal{E}_R:R\ra \mathbb{P}^1$ whose fibers are the strict
transforms of the curves of the pencil of cubics of $\mathbb{P}^2$
having $P_1,\ldots,P_9$ as base points. \end{definition}

\begin{definition} A smooth projective algebraic surface $X$ is called a K3 surface if it has trivial canonical bundle and is regular, i.e., $h^{1,0}(X)=0$.\end{definition}
\begin{rem}{\rm If $\mathcal{E}:X\ra C$ is a genus 1 fibration on a K3 surface $X$, then $g(C)=0$, i.e. $C\simeq \mathbb{P}^1$, \cite[Theorem 6.12]{SS}.}\end{rem}

\subsection{K3 surfaces which are double covers of rational elliptic surfaces}

Let $f:\mathbb{P}^1\ra \mathbb{P}^1$ be a degree 2 map. It is branched in two points, say $b_1$ and $b_2$. Let us consider a rational elliptic surface $\mathcal{E}_R:R\ra \mathbb{P}^1$. We denote by $(F_R)_{b_1}$ and $(F_R)_{b_2}$ the fibers of $\mathcal{E}_R$ over $b_1$ and $b_2$.
We consider the fiber product $R\times_{\mathbb{P}^1}\mathbb{P}^1$ and the following diagram:

\begin{eqnarray}\xymatrix {X\ar[r]_{\beta}\ar[dr]_{\mathcal{E}_X}&R\times_{\mathbb{P}^1}\mathbb{P}^1\ar[d]\ar[r]&R\ar[d]_{\mathcal{E_R}}\\
&\mathbb{P}^1\ar[r]_{f}&\mathbb{P}^1}
\end{eqnarray}
The surface $R\times_{\mathbb{P}^1}\mathbb{P}^1$ naturally carries a map to $\mathbb{P}^1$ which is an elliptic fibration. Moreover, this surface is singular if and only if at least one of the fibers $(F_R)_{b_1}$ or $(F_R)_{b_2}$ is singular. In this case we denote by $\beta:X\ra R\times_{\mathbb{P}^1}\mathbb{P}^1$ its minimal resolution. This is a smooth surface with an elliptic fibration $\mathcal{E}_X:X\ra\mathbb{P}^1$ induced by $R\times_{\mathbb{P}^1}\mathbb{P}^1\ra\mathbb{P}^1$, and so by $\mathcal{E}_R:R\ra \mathbb{P}^1$.

By construction there is a generically $2:1$ map $X\ra R$ which is ramified on the strict transform (via $\beta$) of $(F_R)_{b_1}$ and $(F_R)_{b_2}$. We say that 
$(F_R)_{b_1}$ and $(F_R)_{b_2}$ are the branch fibers.

\begin{proposition}{\rm (\cite[Example 12.5]{SS})} The surface $X$ above is a K3 surface if and only if each of the fibers $(F_R)_{b_1}$ and $(F_R)_{b_2}$ is of type $I_n$ for $n\geq 0$ (where $I_0$ means that the fiber is smooth) or of type $II$, $III$, $IV$.\end{proposition}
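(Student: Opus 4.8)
The plan is to reduce the K3 condition to a single numerical invariant, the topological Euler number $e(X)$, and then to compute $e(X)$ through the behaviour of the singular fibres under the quadratic base change $f$.

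First I would record that $\mathcal{E}_X\colon X\ra \mathbb{P}^1$ is a relatively minimal elliptic fibration possessing a section (the pull-back under $\beta$ of the section of $\mathcal{E}_R$), so there are no multiple fibres. By the canonical bundle formula $K_X=\mathcal{E}_X^*\mathcal{O}_{\mathbb{P}^1}(\chi(\mathcal{O}_X)-2)$ is the pull-back of a divisor on the base; in particular $K_X^2=0$, and Noether's formula gives $\chi(\mathcal{O}_X)=e(X)/12$. I claim $X$ is K3 if and only if $\chi(\mathcal{O}_X)=2$: if $\chi(\mathcal{O}_X)=2$ then $K_X=\mathcal{O}_X$, whence $p_g(X)=h^0(\mathcal{O}_X)=1$ and $q(X)=1+p_g(X)-\chi(\mathcal{O}_X)=0$, so $X$ is K3; the converse is immediate. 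Thus everything reduces to showing that $e(X)=24$ precisely for the listed fibre types.

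Next I would compute $e(X)$. Smooth fibres contribute $0$, so only singular fibres matter. A point $p\neq b_1,b_2$ has two preimages under $f$, over each of which the fibre of $X$ is a copy of $(F_R)_p$; these contribute $2\,e\big((F_R)_p\big)$ in total. Over the ramification point $\tilde b_i$ of $f$ above $b_i$, the fibre of $X$ is the one produced from $(F_R)_{b_i}$ by quadratic base change (the local monodromy is squared); if $(F_R)_{b_i}$ has Kodaira type $T_i$, the resulting type $T_i'$ is read off from the standard table (\cite{SS}), namely $I_n\mapsto I_{2n}$, $II\mapsto IV$, $III\mapsto I_0^*$, $IV\mapsto IV^*$, and $I_n^*\mapsto I_{2n}$, $IV^*\mapsto IV$, $III^*\mapsto I_0^*$, $II^*\mapsto IV^*$. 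Using $\sum_p e\big((F_R)_p\big)=e(R)=12$, this gives
\[
e(X)=2\Big(12-e(T_1)-e(T_2)\Big)+e(T_1')+e(T_2')=24+\sum_{i=1}^{2}\big(e(T_i')-2\,e(T_i)\big).
\]
Evaluating $\delta(T):=e(T')-2\,e(T)$ with $e(I_n)=n$, $e(I_n^*)=n+6$, $e(II)=2$, $e(III)=3$, $e(IV)=4$, $e(IV^*)=8$, $e(III^*)=9$, $e(II^*)=10$, one finds $\delta(T)=0$ exactly for $T\in\{I_n,II,III,IV\}$ and $\delta(T)=-12$ for each starred type. Since every summand is $\le 0$, we get $e(X)\le 24$, with equality if and only if both $T_1,T_2\in\{I_n,II,III,IV\}$; combined with the reduction above, this is the assertion.

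The main obstacle is justifying the base change table, i.e. identifying the Kodaira type over $\tilde b_i$ as the minimal resolution of the (generally singular) fibre product $R\times_{\mathbb{P}^1}\mathbb{P}^1$. This is a local computation around the singular points of the branch fibre and is classical; it can be cited from \cite{SS}. It is worth emphasising that the listed types are exactly the \emph{reduced} Kodaira fibres, and for these the ``if'' direction also admits a direct proof: the branch divisor $B=(F_R)_{b_1}+(F_R)_{b_2}\sim -2K_R$ is reduced, so the double cover $\pi\colon R\times_{\mathbb{P}^1}\mathbb{P}^1\ra R$ determined by $L=\omega_R^{-1}$ has $\omega=\pi^*(\omega_R\otimes L)=\mathcal{O}$ and acquires only rational double points (of types $A_1,A_2,A_3,D_4$) over the nodes, cusp, tacnode and triple point of $B$. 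Hence $K_X=0$ and $\chi(\mathcal{O}_X)=\chi(\mathcal{O}_R)+\chi(\omega_R)=2$, so $X$ is K3; conversely, a non-reduced branch fibre (a starred type) forces $\delta(T_i)<0$ and $\chi(\mathcal{O}_X)<2$.
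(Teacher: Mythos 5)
The paper offers no proof of this proposition: it is quoted directly from \cite[Example 12.5]{SS}, so there is nothing internal to compare against, and supplying an actual argument is already a genuine contribution on your part. Your argument is the standard one and is essentially correct: the reduction of the K3 condition to $e(X)=24$ via the canonical bundle formula and Noether's formula is fine, the Euler-number bookkeeping with $\delta(T)=e(T')-2e(T)$ is computed correctly ($\delta=0$ exactly for the reduced types, $\delta=-12$ for the starred ones), and the closing double-cover argument (branch divisor $\sim -2K_R$ with only rational double points of types $A_1,A_2,A_3,D_4$ over the nodes, cusps, tacnodes and triple points) is a clean self-contained proof of the ``if'' direction that matches the local analysis the paper carries out later when it describes $\widetilde{R}$ and the induced fibers. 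The one place you should be more careful is the ``only if'' direction: when $(F_R)_{b_i}$ is of starred type the fiber $(F_R)_{b_i}$ is non-reduced, the fiber product is non-normal along the multiple components, and the minimal resolution of $R\times_{\mathbb{P}^1}\mathbb{P}^1$ is \emph{not} relatively minimal --- the Kodaira type $T_i'$ in the base-change table (e.g.\ $I_0^*\mapsto I_0$) is the fiber of the relatively minimal model, reached only after contracting $(-1)$-curves. So your displayed formula computes $e$ of the relatively minimal model rather than of $X$ itself in those cases. This is harmless --- either $X$ already contains $(-1)$-curves and so is not K3, or one passes to the relatively minimal model and applies your inequality $e\le 24$ there --- but the proof should say this explicitly rather than treat the fiber of $X$ over $\tilde b_i$ as being of type $T_i'$ in all cases.
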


From now on we assume that $\mathcal{E}_R:R\ra \mathbb{P}^1$ is a rational elliptic surface and that the elliptic surface $\mathcal{E}_X:X\ra \mathbb{P}^1$ obtained by the previous construction is a K3 surface.

\subsection{The branch fibers are stable}\label{subsec: R, tildeR and X, stable branch locus}

If the fiber $(F_R)_{b_1}$ (resp. $(F_R)_{b_2}$)
is smooth, i.e., a fiber of type $I_0$, then the fiber of
$R\times_{\mathbb{P}^1}\mathbb{P}^1\ra\mathbb{P}^1$ over
$f^{-1}(b_1)$ (resp. $f^{-1}(b_2)$) is smooth and so
the fiber of $\mathcal{E}_X:X\ra\mathbb{P}^1$ over $f^{-1}(b_1)$
(resp. $f^{-1}(b_2)$) is also smooth. 

If the fiber
$(F_R)_{b_1}$ (resp. $(F_R)_{b_2}$) is a fiber of type $I_n$,
$n>0$, then the fiber of
$R\times_{\mathbb{P}^1}\mathbb{P}^1\ra\mathbb{P}^1$ over
$f^{-1}(b_1)$ (resp. $f^{-1}(b_2)$) is a copy of a fiber of type
$I_n$ (i.e. it consists of $n$ rational curves meeting as a
polygon), but each singular point of $I_n$ is also a singular
point of the surface $R\times_{\mathbb{P}^1}\mathbb{P}^1$. These
singularities are of type $A_1$ and therefore it suffices to blow
them up once in order to construct $X$. Hence, the fiber of
$\mathcal{E}_X:X\ra\mathbb{P}^1$ over $f^{-1}(b_1)$ (resp.
$f^{-1}(b_2)$) is a fiber of type $I_{2n}$.

The discussion above gives us the following diagram:
\begin{eqnarray}\label{diagram fiber product}\xymatrix{X\ar[d]\ar[r]_{2:1}^{\pi}\ar@{-->}[ddr]^{2:1}&\widetilde{R}\ar[d]^{\beta_2}\\
R\times_{\mathbb{P}^1}\mathbb{P}^1\ar[r]&R\ar[d]^{\beta_1}\\
&\mathbb{P}^2}\end{eqnarray}
where $\beta_2:\widetilde{R}\ra R$ is the blow up of $R$ in all the singular points of the fibers $(F_R)_{b_1}\subset R$ and $(F_R)_{b_2}\subset R$ and $\beta_1:R\ra \mathbb{P}^2$ is the blow up of $\mathbb{P}^2$ in the points $P_1,\ldots, P_9$. Let us assume that the fiber $(F_R)_{b_1}$ is of type $I_{n_{b_1}}$ and the fiber $(F_R)_{b_2}$ is of type $I_{n_{b_2}}$ where $n_{b_1}$ and $n_{b_2}$ are non-negative integers. Then $\widetilde{R}$ is isomorphic to a blow up of $\mathbb{P}^2$ in $9+n_{b_1}+n_{b_2}$ (possibly infinitely near) points $P_1, P_2\ldots P_9,Q_1,Q_2\ldots Q_{n_{b_1}+n_{b_2}}$. So $X$ is a double cover of a blow up of $\mathbb{P}^2$ branched along the strict transform of the sextic which is the union of the two (possibly reducible) cubics $\beta_1((F_R)_{b_1})$ and $\beta_1((F_R)_{b_2})$.

The map $\mathcal{E}_R\circ\beta_2:\widetilde{R}\ra \mathbb{P}^1$ is an elliptic fibration which is, in general, not relatively minimal. This is the case if and only if $\beta_2$ is the identity.

\subsection{The branch fibers can be unstable}

If at least one of the branch fibers is unstable (i.e. of type $II$, $III$ or $IV$) we still construct $X$ as double cover of a blow up $\widetilde{R}$ of $R$, where $\widetilde{R}$ is a blow up of the singular points of the fiber $II$, $III$, $IV$ and thus also in this case we obtain that $\widetilde{R}$ is isomorphic to a blow up of $\mathbb{P}^2$ in $9+r$ (possibly infinitely near) points $P_1, P_2\ldots P_9,Q_1,Q_2,\ldots, Q_{r}$ for a certain value of $r$ (which depends on the type of the fibers $(F_R)_{b_1}$, $(F_R)_{b_2}$).  If $(F_R)_{b_i}$ is a fiber of type $IV$, then $\beta_2:\widetilde{R}\ra R$ introduces 4 exceptional curves over the unique singular point of  $(F_R)_{b_i}$, hence the construction of $\widetilde{R}$ is slightly different with respect to the stable case, but in both the situation $\widetilde{R}$ is the minimal blow up of $R$ such that the branch locus of $X\ra\widetilde{R}$ is smooth.

We present a table with the singular fibers on the induced elliptic fibration on $X$ on the next paragraph.

\subsection{The non-symplectic involution $\iota$}
By construction $X$ is a double cover of the smooth (possibly non-minimal) surface $\widetilde{R}$.  The surface $X$ is therefore endowed with a cover involution. We denote it by $\iota$ and observe that $X/\iota\simeq\widetilde{R}$. The involution $\iota$ is non-symplectic, i.e. it does not preserve the symplectic structure of $X$ and thus it is not trivial
on $H^{2,0}(X)$. Its fixed locus is
the ramification locus of the smooth double cover $X\ra\widetilde{R}$.
In particular, we have the following table.

 \begin{eqnarray*}\begin{array}{|c|c|c|} \hline \text{ Fiber on branch locus on } R &\text{Fiber induced on } X&
\# \text{ components fixed by } \iota \\
\hline
I_0 & I_0 & 1 \text{ genus } 1\, \text{ curve} \\
\hline
I_n & I_{2n} & n  \text{ rational curves }\\
\hline
II & IV & 1 \text{ rational curve }\\
\hline
III & I_0^* & 2  \text{ rational curves }\\
\hline 
IV & IV^* & 4  \text{ rational curves }\\
\hline
\end{array}\end{eqnarray*}

The reader can consult \cite[VI.4.1]{Mi} for more details on the fiber types after base change.

\section{Bundles over $\widetilde{R}$ and induced genus 1-fibration on $X$}

First we recall the definition of conic-bundles on a
rational surface. Then we focus on rational elliptic surfaces and generalize the concept to some similar
objects which are useful for our purpose. 

\begin{definition}
A conic bundle on the surface $R$ is a surjective morphism $g: R \rightarrow \mathbb{P}^1$ such that the generic fiber is a smooth rational curve.
\end{definition}

We give an interpretation of the definition above in terms of classes of divisors on $\mathrm{NS}(R)$. In the following we denote by $(F_R)_t$ the fiber of the elliptic fibration $\mathcal{E}_R:R\ra\mathbb{P}^1$ over $t\in\mathbb{P}^1$.

Let $D$ be a smooth fiber of a conic bundle on $R$. Since the class of $D$ gives the class of a fiber, it is \textit{nef} and has trivial self-intersection. Moreover, since it is rational, adjunction implies that $D\cdot K_R=-2$ and since $F_R=-K_R$, one obtains $D \cdot F_R=2$.

On the other hand, given a \textit{nef} class $D$ with the above intersection properties, the induced map $|D|: R\rightarrow \mathbb{P}^1$ is a conic bundle.

From the above, on a surface endowed with an elliptic fibration, we have the following equivalent definition of conic bundle.
\begin{definition}\label{defi: conic bundle}
A conic bundle on $R$ is a \textit{nef} class $D$ in $\mathrm{NS}(R)$ such that 
\begin{itemize}
\item[i)] $D\cdot(F_R)_t= D\cdot(-K_R)= 2$.
\item[ii)] $D^2=0$.
\end{itemize}
\end{definition}

Denote by $D_s$ the fibers of the conic bundle $|D|$. These fibers are mapped by $\beta_1$ to a pencil of rational curves passing with a certain multiplicity through the points $P_1,\ldots ,P_9$. Moreover, since $D_s$, $s\in\mathbb{P}^1$, is a bisection of $\mathcal{E}_R$, it meets both $\beta_1((F_R)_{b_1})$ and $\beta_1((F_R)_{b_2})$ in two points each.\\

Since we have a $2:1$ map from $X$ to $\widetilde{R}$, which is in general a blow up of $R$, we need a generalization of the previous definition which extends the notion of conic bundles to $\widetilde{R}$.

\begin{definition}\label{defi: generalized conic bundle}
A generalized conic bundle on $\widetilde{R}$ is a \textit{nef} class $D$ in $\mathrm{NS}(\widetilde{R})$ such that 
\begin{itemize}
\item[i)]  $D\cdot(-K_{\widetilde{R}})= 2$.
\item[ii)] $D^2=0$.
\end{itemize}
\end{definition}

Note that $\tilde{R}$ is endowed with a not relatively minimal elliptic fibration induced by the elliptic fibration on $R$. We call $F_{\widetilde{R}}$ the class of a smooth fiber. The difference between the Definitions \ref{defi: conic bundle} and \ref{defi: generalized conic bundle} is that since $-K_{\widetilde{R}}$ is not the class of a fiber, $D \cdot (-K_{\widetilde{R}})$ is not necessarily equal to $D\cdot F_{\tilde{R}}$.

A conic bundle on $R$ induces a generalized conic bundle on
$\widetilde{R}$. If $R \neq \widetilde{R}$ then there are generalized conic bundles on $\widetilde{R}$ which do not induce conic bundle on $R$.

A generalized conic bundle is mapped by $\beta_1\circ\beta_2$ to a pencil of rational curves passing with certain multiplicities through the points $P_1,\ldots ,P_9,Q_1\ldots Q_r\subset\mathbb{P}^2$.

\begin{rem}{\rm The notion of conic bundle on $R$ is clearly independent on $f$ and on the branching points $b_1$ and $b_2$. On the other hand the notion of generalized conic bundle strictly depends on $b_1$ and $b_2$ and in particular on the properties of the fibers $(F_R)_{b_1}$ and $(F_R)_{b_2}$.}\end{rem}

For similar reasons as above, we also need a generalization of the notion of genus 1 pencils.

\begin{definition}
A splitting genus 1 pencil on $\widetilde{R}$ is a proper morphism $\varphi: \widetilde{R}\rightarrow \mathbb{P}^1$ such that
\begin{itemize}
\item[i)] $C_s=\varphi^{-1}(s)$ is a smooth genus 1 curve for almost all $s \in \mathbb{P}^1$.
\item[ii)] $(C_s)(-K_{\widetilde{R}})=0$ for almost all $s\in \mathbb{P}^1$.
\end{itemize}

We also call the pencil of curves $\{C_s\}_{s\in \mathbb{P}^1}$ as above, a splitting genus 1 pencil on $\widetilde{R}$.
\end{definition}
Since $C_s$ is generically a smooth curve of genus 1 and $C_sK_{\widetilde{R}}=0$, by adjunction one has $C_s\cdot C_s=0$. 
A priori, given a smooth genus 1 curve $C_s$ such that $C_s\cdot K_{\widetilde{R}}=0$, the Riemann--Roch Theorem assures only that $h^0(\widetilde{R},\mathcal{O}_{\widetilde{R}}(C_s))\geq 1$, so nothing guarantees that there exists a 1-dimensional linear system of curves $C_s$. On the other hand we know that the previous definition is not empty, indeed, denoting by $F_R$ a generic smooth fiber of the fibration $\mathcal{E}_{R}:R\ra\mathbb{P}^1$, $\beta_2^*(F_R)$ is a smooth curve of genus 1, which moves in a 1-dimensional linear system, i.e. $\varphi_{|\beta_2^*(F_R)|}:\widetilde{R}\ra\mathbb{P}^1$ is a (not necessarily relatively minimal) elliptic fibration and satisfy the definition of splitting genus 1 pencils. Hence on $\widetilde{R}$ there is at least one splitting genus 1 pencil, namely the one induced by the elliptic fibration $\mathcal{E}_R:R\ra\mathbb{P}^1$ by pull back.

The reason of the word ``splitting" in the definition is explained by the following lemma.
\begin{lemma}\label{lemma on splitting genus 1 pencil}
Let $\varphi: \widetilde{R}\rightarrow \mathbb{P}^1$ be a genus 1 pencil on $\widetilde{R}$ and $C_s$ be a generic fiber of $\varphi$. Then $\pi^{-1}(C_s)$ splits in the union of two curves, both isomorphic to $C_s$.
\end{lemma}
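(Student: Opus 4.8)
The plan is to restrict the double cover $\pi\colon X\to\widetilde{R}$ to a generic fibre $C_s$ and to identify this restriction explicitly as a split \'etale cover. Since $X$ is a K3 surface realised as the double cover of $\widetilde{R}$ branched along the \emph{smooth} divisor $B$ (the strict transforms of the two branch fibres), the cover is encoded by a line bundle $L$ on $\widetilde{R}$ with $L^{\otimes 2}\cong\mathcal{O}_{\widetilde{R}}(B)$ and $\pi_*\mathcal{O}_X\cong\mathcal{O}_{\widetilde{R}}\oplus L^{-1}$. The double cover formula $K_X=\pi^*(K_{\widetilde{R}}\otimes L)$, together with $K_X\cong\mathcal{O}_X$ and the injectivity of $\pi^*$ on $\mathrm{NS}(\widetilde{R})\otimes\mathbb{Q}$ (which holds because $\pi_*\pi^*=2\cdot\mathrm{id}$), forces $L\cong\mathcal{O}_{\widetilde{R}}(-K_{\widetilde{R}})$. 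This is the only global input I need.

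First I would show that $C_s$ is disjoint from $B$ for generic $s$. Using condition (ii) of the definition of a splitting genus 1 pencil, $C_s\cdot B=C_s\cdot 2L=2\,C_s\cdot(-K_{\widetilde{R}})=0$. As $C_s$ is a fibre of $\varphi$ it is nef, and as a generic member of a pencil it is not contained in the fixed divisor $B$; hence $C_s\cdot B_i\ge 0$ for every component $B_i$ of $B$, and since these contributions sum to $0$ each of them vanishes, so $C_s\cap B=\emptyset$. Consequently $\pi^{-1}(C_s)\to C_s$ is an \'etale double cover, classified by the $2$-torsion line bundle $L|_{C_s}$ together with the trivialisation of $L^{\otimes 2}|_{C_s}$ coming from the nowhere vanishing restriction of the section cutting out $B$.

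The crux is to prove that $L|_{C_s}$ is not merely $2$-torsion but actually trivial. Adjunction on $\widetilde{R}$ gives $K_{C_s}\cong(K_{\widetilde{R}}+C_s)|_{C_s}$, and $g(C_s)=1$ yields $K_{C_s}\cong\mathcal{O}_{C_s}$. On the other hand $C_s$ is a fibre of the morphism $\varphi$, so $\mathcal{O}_{\widetilde{R}}(C_s)\cong\varphi^*\mathcal{O}_{\mathbb{P}^1}(1)$ and hence $\mathcal{O}_{\widetilde{R}}(C_s)|_{C_s}\cong\mathcal{O}_{C_s}$ (the normal bundle of a fibre is trivial). Combining the two, $L|_{C_s}=\mathcal{O}_{\widetilde{R}}(-K_{\widetilde{R}})|_{C_s}\cong\mathcal{O}_{C_s}$. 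I expect this to be the main obstacle: a bare intersection-number computation only shows $\deg(L|_{C_s})=0$ and $(L|_{C_s})^{\otimes2}\cong\mathcal{O}_{C_s}$, i.e.\ that $L|_{C_s}$ is $2$-torsion in $\mathrm{Pic}^0(C_s)$, and a nonzero such class would produce a \emph{connected} cover; it is the adjunction argument, exploiting the triviality of the normal bundle of a fibre, that upgrades this to genuine triviality.

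Finally I would conclude. With $L|_{C_s}\cong\mathcal{O}_{C_s}$ trivial and the defining section restricting to a nowhere vanishing section $u$ of $L^{\otimes2}|_{C_s}\cong\mathcal{O}_{C_s}$, the relative algebra $\mathcal{O}_{C_s}\oplus L^{-1}|_{C_s}$ becomes $\mathcal{O}_{C_s}[t]/(t^2-u)$; since $C_s$ is projective and connected, $u$ is a nonzero constant, so $t^2-u$ factors over the algebraically closed field $k$ and the algebra splits as $\mathcal{O}_{C_s}\times\mathcal{O}_{C_s}$. Equivalently, the cover corresponds to the trivial class in $H^1(C_s,\mathbb{Z}/2\mathbb{Z})$. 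Hence $\pi^{-1}(C_s)$ is the disjoint union of two curves, each mapping isomorphically onto $C_s$, which is exactly the asserted splitting.
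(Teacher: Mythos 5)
Your proof is correct, but it takes a genuinely different route from the paper's. The paper argues by contradiction on $X$: if $\pi^{-1}(C_s)$ were connected it would be a smooth genus $1$ curve (trivial self-intersection plus adjunction on the K3), so $\pi$ restricted to it would be unramified, the cover involution $\iota$ would then preserve each fiber of the resulting genus $1$ fibration and act on it freely, i.e.\ as a translation; a translation preserves the period of each fiber, forcing $\iota$ to be symplectic, which is impossible since $X/\iota\simeq\widetilde{R}$ is rational. You instead work entirely downstairs: you identify the cover datum $L$ with $\mathcal{O}_{\widetilde{R}}(-K_{\widetilde{R}})$, show $C_s\cap B=\emptyset$ from $C_s\cdot(-K_{\widetilde{R}})=0$ and nefness, and then prove the restricted \'etale cover is split because $L|_{C_s}\cong K_{\widetilde{R}}|_{C_s}^{-1}$ is \emph{trivial} (not merely $2$-torsion) by adjunction together with the triviality of the normal bundle of a fiber. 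Your argument is more local and constructive: it pinpoints the exact mechanism of the splitting and would apply to any smooth double cover whose datum restricts trivially to the curve, without invoking non-symplecticity or the K3 hypothesis beyond $L=-K_{\widetilde{R}}$; the paper's argument is softer and shorter but leans on the global theory of involutions on K3 surfaces. One small point of hygiene: to pass from $\pi^{*}(K_{\widetilde{R}}\otimes L)\cong\mathcal{O}_X$ to $L\cong\mathcal{O}_{\widetilde{R}}(-K_{\widetilde{R}})$ as line bundles (which you need, since the final step uses the isomorphism class of $L|_{C_s}$, not just its numerical class), you should note that $\operatorname{Pic}(\widetilde{R})$ is torsion-free because $\widetilde{R}$ is a rational surface, so that $2(K_{\widetilde{R}}+L)=0$ forces $K_{\widetilde{R}}+L=0$; your appeal to injectivity on $\mathrm{NS}\otimes\mathbb{Q}$ only gives the numerical statement. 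With that remark added, the proof is complete.
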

\proof
The generic fiber $C_s$ is a smooth curve of genus 1 on $\widetilde{R}$ and thus, by adjunction, its class has trivial self-intersection. Therefore $\pi^*(C_s)$ has also trivial self-intersection on $X$. Moreover $\pi^{-1}(C_s)$ is either a connected smooth curve or the union of two curves, both isomorphic to $C_s$. In the first case $\pi^{-1}(C_s)$ is a smooth genus 1 curve, by adjunction, since its class has trivial self-intersection. In this case $\pi$ restricted to $\pi^{-1}(C_s)$ is a 2:1 map between smooth genus 1 curves, and so it is unramified. This means that $C_s$ does not meet the branch locus of $\pi$ and that $\varphi_{|\pi^{-1}(C_s)|}:X\ra\mathbb{P}^1$ is a genus 1 fibration. The cover involution $\iota\in \Aut(X)$ preserves the fibers of the fibration $\varphi_{|\pi^{-1}(C_s)|}:X\ra\mathbb{P}^1$ and acts on the generic fiber as a fixed point free involution. A fixed point free involution on an elliptic curve preserves the period of the curve, and thus $\iota$ preserves the period of $X$. As a consequence $\iota$ is a symplectic involution on the K3 surface $X$. But this is impossible, because $X/\iota\simeq \widetilde{R}$ and $\widetilde{R}$ is a rational surface. In particular $\widetilde{R}$ does not have a symplectic structure and so $\iota$ does not preserves the symplectic structure of $X$. We conclude that for each splitting genus 1 fibration, $\pi^{-1}(C_s)$ splits in the union of two curves, both isomorphic to $C_s$.
\endproof

By construction, a splitting genus 1 pencil on $\widetilde{R}$ is mapped by $\beta_1\circ\beta_2$ to a pencil of (not necessarily smooth) curves of genus 1 on $\mathbb{P}^2$  passing through the points $\{P_1,\ldots, P_9,Q_1,\ldots, Q_r\}$ with a certain multiplicity. All the curves of the pencil of plane curves $(\beta_1\circ\beta_2)(C_s)$ intersect $\beta_1((F_R)_{b_1}\cup (F_R)_{b_2})$ with even multiplicity at each intersection point not contained in the set $\{P_1,\ldots, P_9,Q_1,\ldots, Q_r\}$, since they split in the double cover. This imposes strong conditions on $\{C_s\}$ and $(F_R)_{b_1}\cup (F_R)_{b_2}$.

Viceversa under certain conditions the choice of a pencil of, not necessarily smooth, genus 1 plane curves induces a splitting genus 1 pencil on $\widetilde{R}$; for example one can take a pencil of smooth cubics in $\mathbb{P}^2$ as follows:
The surface $\widetilde{R}$ is isomorphic to the blow up of $\mathbb{P}^2$ in the points $P_1,\ldots, P_9,Q_1, \ldots, Q_r$, and $r\geq 2$. A choice of $9$ points in $\{P_1,\ldots, P_9,Q_1,\ldots,
Q_r\}$ general enough is associated to a splitting
genus 1 pencil on $\widetilde{R}$. To be more precise, let us
choose nine points among $\{P_1,\ldots, P_9,Q_1,\ldots,
Q_r\}$ such that there is a pencil of cubics
through these nine points. If this pencil contains at least one
(and so infinitely many) smooth curves, then the choice is general
enough and gives a splitting genus 1
pencil on $\widetilde{R}$. Indeed, if the curves of the pencil pass simply through nine points in $\{P_1,\ldots, P_9,Q_1, \ldots,
Q_r\}$, then the intersection between the class of one of these curves and the canonical bundle of $\widetilde{R}$ is automatically zero. In particular every choice of nine
points in $\{P_1,\ldots, P_9,Q_1, \ldots, Q_r\}$ which
is general enough and contains at least two points $Q_i, Q_j$ is
associated to a splitting genus 1 pencil which is not the pencil
of cubics associated to $\mathcal{E}_R:R\ra \mathbb{P}^1$. We see below why having only one extra point $Q_1$ is not enough to produce different splitting genus 1 pencils.

\begin{proposition}\label{prop: conditions to exclude splitting genus 1 smooth cubics}
Let $R$ be a rational elliptic surface isomorphic to the blow up of $\mathbb{P}^2$ in $P_1,\ldots, P_9$. Let $\widetilde{R}$ be the blow up of $R$ at some singular points of fibers of $R$. Let $Q_1,\ldots, Q_r$ be the image of these singular points in $\mathbb{P}^2$ by the composition of the two blow up maps above. If one of the following holds
\begin{itemize}
\item[i)] $r=1$, or
\item[ii)] the points $Q_1,\ldots, Q_r$ lie all in the same singular cubic below a fiber of $R$
\end{itemize}
then there is only one splitting genus 1 pencil on $\widetilde{R}$ which is induced by a pencil of smooth cubics in $\mathbb{P}^2$, namely the one induced by the elliptic fibration on $R$.  
\end{proposition}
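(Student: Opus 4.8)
The plan is to translate the statement into intersection theory on $\widetilde{R}$ and then extract a contradiction from B\'ezout's theorem in $\mathbb{P}^2$. Writing $\widetilde{R}$ as the blow up of $\mathbb{P}^2$ at $P_1,\ldots,P_9,Q_1,\ldots,Q_r$, I would work in the basis $H,E_{P_1},\ldots,E_{P_9},E_{Q_1},\ldots,E_{Q_r}$ of $\NS(\widetilde{R})$, for which $-K_{\widetilde{R}}=3H-\sum_i E_{P_i}-\sum_j E_{Q_j}$. A splitting genus 1 pencil induced by a pencil of plane cubics has generic member of class $C=3H-\sum_p m_p E_p$, where $p$ ranges over the blown up points and $m_p\geq 0$. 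The splitting condition $C\cdot(-K_{\widetilde{R}})=0$ gives $\sum_p m_p=9$, while genus 1 together with adjunction forces $C^2=0$, i.e. $\sum_p m_p^2=9$. Subtracting, $\sum_p m_p(m_p-1)=0$, so every $m_p\in\{0,1\}$ and exactly nine of the points are passed through, each simply. Hence such a pencil is the same datum as a choice of nine points $S\subseteq\{P_1,\ldots,P_9,Q_1,\ldots,Q_r\}$ which is the base locus of a pencil of cubics with smooth general member.

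Next I would fix such an $S$ and write $S=\{P_i:i\in I\}\cup\{Q_j:j\in J\}$, the goal being to prove $J=\emptyset$, which identifies the pencil with the one coming from $\mathcal{E}_R$. Assume $J\neq\emptyset$. Under hypothesis (i) or (ii), all the points $Q_j$ with $j\in J$ lie on a single singular cubic $\Gamma$, namely the image under $\beta_1$ of the singular branch fiber, and each such $Q_j$ is a singular point of $\Gamma$, so $\mathrm{mult}_{Q_j}(\Gamma)\geq 2$; moreover $\Gamma$ passes through all the base points $P_i$. In case (i) this is just the single point $Q_1$ on its nodal or cuspidal cubic, while in case (ii) it is the hypothesis itself.

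The heart of the argument is then B\'ezout. Let $C$ be the smooth general member of the pencil attached to $S$. Since $C$ is a smooth, hence irreducible, cubic and $\Gamma$ is singular, $C\neq\Gamma$ and they share no component, so $\sum_p I_p(C,\Gamma)=9$. At each $P_i$ with $i\in I$ both curves pass through, giving $I_{P_i}(C,\Gamma)\geq 1$; at each $Q_j$ with $j\in J$ the curve $C$ is smooth while $\Gamma$ has multiplicity at least $2$, so $I_{Q_j}(C,\Gamma)\geq \mathrm{mult}_{Q_j}(C)\cdot \mathrm{mult}_{Q_j}(\Gamma)\geq 2$. Summing over $S$ yields $9=\sum_p I_p(C,\Gamma)\geq |I|+2|J|=9+|J|$, forcing $|J|\leq 0$, a contradiction. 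Hence $J=\emptyset$ and $S=\{P_1,\ldots,P_9\}$, so the only splitting genus 1 pencil induced by a pencil of smooth cubics is the one coming from $\mathcal{E}_R$. For case (i) one may alternatively observe that the pencil of cubics through any eight of the $P_i$ already has its ninth base point equal to the omitted $P_i\neq Q_1$, which rules out $Q_1$ being a base point.

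The step I expect to be most delicate is the passage from the clean ``nine distinct points in general position'' picture to the actual geometry, where some $Q_j$ may be infinitely near to the $P_i$ or to one another; this is precisely what occurs for the higher $I_n$, $III$ and $IV$ fibers. There the lattice computation must be carried out with total transforms and proximity relations, and the inequality $\mathrm{mult}_{Q_j}(\Gamma)\geq 2$ must be read through the multiplicity sequence of $\Gamma$; one must also verify that $\Gamma$ is genuinely a single cubic meeting $C$ in nine points counted correctly. Granting that each included $Q_j$ contributes intersection multiplicity at least $2$ with any smooth cubic through it, the B\'ezout count goes through verbatim, so I expect the bookkeeping around infinitely near points, rather than the main inequality, to be the real work.
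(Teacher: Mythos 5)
Your proposal is correct and follows essentially the same route as the paper: the paper's proof of case (ii) is exactly your B\'ezout count $s+2t>9$ against the singular cubic through $P_1,\ldots,P_9$, and its proof of case (i) is the Cayley--Bacharach observation you mention as an alternative. Your preliminary lattice computation ($\sum m_p=9$, $\sum m_p^2=9$ forcing all multiplicities to be $0$ or $1$) makes explicit a step the paper only asserts in the surrounding discussion, and your unification of case (i) under the B\'ezout argument is a harmless streamlining.
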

\begin{proof}
The points $P_1,\ldots, P_9$ are the base points of a pencil of cubics which gives the elliptic fibration on $R$. If $r=1$, then a splitting genus 1 pencil on $\widetilde{R}$ is the pull back of a pencil of cubics in $\mathbb{P}^2$ through nine points among $P_1,\ldots, P_9, Q_1$. To conclude in case $i)$, it suffices to observe that any cubic that passes through eight points among $P_1,\ldots, P_9$ also passes through the ninth.
For $ii)$, note that the points $Q_i$'s lie below singularities of a fiber and are therefore singular points on a cubic that passes through $P_1,\ldots, P_9$. If $C$ is a smooth cubic through $9=s+t$ points, where $s$ points are chosen among $P_1, \ldots, P_9$ and $t$ points are among $Q_1,\ldots, Q_r$, then $C$ intersects the singular cubic above with multiplicity at least $s+ 2t>9$, which is absurd if $t>0$. Hence the only splitting genus 1 fibration induced by cubics is the one given by cubics through $P_1,\ldots, P_9$, i.e., the one that is already an elliptic fibration on $R$.
\end{proof}

Pencils of smooth cubics are not the only ones responsible for the existence of splitting genus 1 fibrations. Indeed there might exist splitting genus 1 pencils induced by a system of non smooth genus 1 curves on $\mathbb{P}^2$ (see Section \ref{subsec: example splitting genus 1 pencil}).

\begin{proposition}\label{prop: (generalized) conic bundles and splitting genus 1 pencil induces genus 1 fibration} Each conic bundle, generalized conic bundle and splitting genus 1 pencil on $\widetilde{R}$ induces a genus 1-fibration on
$X$.\end{proposition}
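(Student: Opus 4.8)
The plan is to handle the three cases uniformly by pulling back each pencil to $X$ via the cover map $\pi:X\ra\widetilde{R}$ and checking that the resulting family of curves is a genus $1$ fibration. The key observation is that the cover map $\pi$ is finite of degree $2$, so pulling back a base-point-free pencil on $\widetilde{R}$ gives a base-point-free pencil on $X$, hence a morphism $X\ra\PP^1$; the content is to verify that its generic fiber is a smooth genus $1$ curve.

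For the \emph{splitting genus $1$ pencil} case, the work is essentially already done by Lemma \ref{lemma on splitting genus 1 pencil}: the generic fiber $C_s$ is a smooth genus $1$ curve, and the lemma shows $\pi^{-1}(C_s)$ splits as the disjoint union of two copies of $C_s$. Each such copy is itself a smooth genus $1$ curve moving in a pencil on $X$, so I get a genus $1$ fibration directly. For the two \emph{conic bundle} cases (ordinary and generalized), the generic fiber $D_s$ is a smooth rational curve with $D_s\cdot(-K_{\widetilde{R}})=2$. The idea is to compute the genus of $\pi^{-1}(D_s)$ by the Hurwitz formula applied to the restriction $\pi|_{\pi^{-1}(D_s)}:\pi^{-1}(D_s)\ra D_s\cong\PP^1$. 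This restriction is a double cover branched exactly at the points where $D_s$ meets the ramification locus of $\pi$, which is the fixed locus of $\iota$, i.e. the (strict transforms of the) branch fibers $(F_R)_{b_1}$ and $(F_R)_{b_2}$ on $\widetilde{R}$. The intersection number $D_s\cdot(-K_{\widetilde{R}})=2$ measures precisely how $D_s$ meets this anticanonical branch divisor, so generically $D_s$ meets the branch locus transversally in two points; Hurwitz then gives $2g-2 = 2(2\cdot 0 - 2) + 2$, i.e. $g=1$. Thus $\pi^{-1}(D_s)$ is a smooth genus $1$ curve, and these move in a pencil, giving a genus $1$ fibration on $X$.

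The two remaining points to nail down are: (a) that the branch locus of $\pi$ on $\widetilde{R}$ is anticanonically equivalent to the fiber class $-K_{\widetilde{R}}$-related divisor, so that the condition $D\cdot(-K_{\widetilde{R}})=2$ correctly counts two branch points; and (b) that the generic member $D_s$ is reduced, irreducible, and meets the branch locus transversally (not tangentially or through the blown-up points $Q_i$), so that Hurwitz applies with exactly two simple branch points. Point (a) follows from the construction: $X\ra\widetilde{R}$ is the smooth double cover with branch locus the smooth strict transform of $\beta_1((F_R)_{b_1})\cup\beta_1((F_R)_{b_2})$, which lies in $|-2K_R|$ pulled back appropriately, so the branch divisor $B$ satisfies $\mathcal{O}_{\widetilde{R}}(B)$ being twice the line bundle defining the cover, and $D\cdot B = 2(D\cdot(-K_{\widetilde{R}}))/2$ works out to give two branch points for the restricted cover. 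Point (b) is a genericity statement: since $D$ is \emph{nef} with $D^2=0$, the pencil $|D|$ is base-point-free, so the generic fiber avoids the finitely many bad points and meets the branch divisor transversally by Bertini.

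The main obstacle I anticipate is the bookkeeping in point (a) for the \emph{generalized} conic bundle, precisely where the definitions of conic bundle and generalized conic bundle diverge. On $\widetilde{R}$ the class $-K_{\widetilde{R}}$ is no longer the fiber class $F_{\widetilde{R}}$, as the excerpt emphasizes, so one must be careful that the branch divisor $B$ is anticanonical in the sense that makes $D\cdot(-K_{\widetilde{R}})=2$ translate into exactly two branch points of the restricted double cover, even when $D$ meets exceptional curves $Q_i$ lying over the branch fibers. I would verify that the strict transform of the branch locus is still the relevant component and that the exceptional divisors introduced by $\beta_2$ either are not in the branch locus or contribute evenly, so the Hurwitz count remains $g=1$. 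Once this is checked, the conclusion is immediate in all three cases.
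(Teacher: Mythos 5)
Your treatment of the splitting genus $1$ pencil case is the same as the paper's (both simply invoke Lemma \ref{lemma on splitting genus 1 pencil}), but for the two conic bundle cases you take a genuinely different route: the paper never restricts $\pi$ to a fiber and applies Hurwitz here; it instead notes that $C_s\cdot(-K_{\widetilde{R}})=2$ and $g(C_s)=0$ force $[C_s]^2=0$ by adjunction, hence $[\pi^{-1}(C_s)]^2=0$ on $X$, and an effective class of square zero on a K3 gives a genus $1$ fibration (again by adjunction, using $K_X=0$). That argument sidesteps entirely the question of how many branch points the restricted cover has, which is precisely where your version goes wrong.

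The concrete error is in your step (a). The branch divisor $B$ of $\pi:X\ra\widetilde{R}$ satisfies $B\sim -2K_{\widetilde{R}}$ (equivalently $K_{\widetilde{R}}=-B/2$, as the paper records when discussing type 3) fibrations), because $K_X=\pi^*(K_{\widetilde{R}}+L)=0$ with $B\in|2L|$. Hence a generic fiber $D_s$ of a (generalized) conic bundle meets the branch locus in $D_s\cdot B=2\,D_s\cdot(-K_{\widetilde{R}})=4$ points, not $2$. Your formula ``$D\cdot B=2(D\cdot(-K_{\widetilde{R}}))/2$'' conflates $B$ with the half-divisor $L$. Moreover, the Hurwitz computation you display, $2g-2=2(2\cdot 0-2)+2=-2$, actually yields $g=0$, contradicting the conclusion $g=1$ that you assert; with the correct count of four branch points one gets $2g-2=-4+4=0$ and hence $g=1$ as desired. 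So the strategy is viable and the conclusion is correct, but the argument as written does not establish it: both the branch-point count and the arithmetic of the displayed formula must be repaired. Your point (b) (Bertini/genericity to get transversal intersection with the branch locus) is sound and is in fact consistent with the paper's later analysis, where the fibers meeting the branch locus non-transversally are exactly the ones producing the extra singular fibers of Theorem \ref{theo: extra singular fibers conic bundles}.
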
 \proof Since every conic bundle on $R$
induces a generalized conic bundle on $\widetilde{R}$, it suffices
to prove the result for generalized conic bundles in order to
obtain the proof also for conic bundles.

Let  $\{C_s\}_{s\in\mathbb{P}^1}$ be a generalized conic bundle on
$\widetilde{R}$. The curve $C_s$ is a smooth rational curve for almost every
$s\in\mathbb{P}^1$ and $C_s(-K_{\widetilde{R}})=2$. By adjunction,
this implies that $[C_S]^2=0$ in $NS(\widetilde{R})$. Hence the
class of the curves $\pi^{-1}(C_s)$ in $NS(X)$ has trivial
self-intersection. Since this is clearly an effective class, we have a genus 1 fibration
$\varphi_{|\pi^{-1}(C_s)|}:X\ra\mathbb{P}^1$.

Let $\{C_s\}_{s\in\mathbb{P}^1}$ be a splitting genus 1 pencil of curves on $\widetilde{R}$. By Lemma \ref{lemma on splitting genus 1 pencil}, for almost every $s\in\mathbb{P}^1$, the curves $C_s$
are smooth and $\pi^{-1}(C_s)$ is given by two disjoint copies of $C_s$ and $\varphi_{|\pi^{-1}(C_s)|}:X\ra\mathbb{P}^1$ is a genus 1 fibration.
\endproof

\section{Elliptic fibrations on $X$ and induced system of curves on $\widetilde{R}$ and on $R$}

In this section we first study what are the linear systems of curves induced on $\widetilde{R}$ by the elliptic fibrations on $X$. We classify three possible types of elliptic fibrations on $X$ according to the action of $\iota$. We then associate each of these types to a linear system on $\widetilde{R}$, induced by the elliptic fibration. This first goal of this section is summarized in Theorem \ref{theor: pencils induced on R by elliptic  fibrations on X}. In the second part of this section, we study what are the properties of $X$, $\iota$ and $\mathcal{E}_R$ which allow one to conclude that $X$ admits or not certain types of elliptic fibrations. The corollaries \ref{corollary: iota is id on NS}, \ref{corollary: Fb1 and Fb2 smooth}, \ref{corollary: Fb1Fb2 smooth+iota is on NS} give a first step towards a classification of the elliptic fibrations on K3 surfaces which are double cover of rational elliptic surfaces. In particular, in Proposition \ref{prop NS in case iota is id on NS}, we assume certain properties on the pair $(X,\iota)$ and we identify the N\'eron--Severi group of $X$ and the properties of a rational elliptic surface $R$ of which $X$ is the cover.

\subsection{Elliptic fibrations of $X$ and action of $\iota$}
Let $\mathcal{E}^{(n)}:X\ra\mathbb{P}^1$ be an elliptic fibration
on $X$, not necessarily equal to $\mathcal{E}_X$ (the elliptic
fibration induced on $X$ by $\mathcal{E}_R$). We denote by
$[F^{(n)}]$ the class of the fiber of this fibration and by
$F^{(n)}_t$ the fiber of this fibration over the point
$t\in\mathbb{P}^1$.
We recall that $X$ is naturally endowed with a non-symplectic involution $\iota$, which is the cover involution of the double cover $\pi:X\ra \widetilde{R}$.
The action of $\iota$ on an elliptic fibration $\mathcal{E}^{(n)}$ can be of three different types: \begin{enumerate}
\item $\iota$ preserves the fibers
of the fibration $\mathcal{E}^{(n)}$, i.e.
$\iota(F^{(n)}_t)=F^{(n)}_t$ for every $t\in\mathbb{P}^1$. In this
case the action of $\iota$ on the basis of the fibration is
trivial.
\item $\iota$ does not preserve the
fibers of the fibration, but $\iota^*$ preserves the fibration and
in particular the class of a fiber, i.e.
$\iota(F^{(n)}_t)=F^{(n)}_{t'}$ for certain values $t\neq t'$ but
$\iota^*([F^{(n)}])=[F^{(n)}]$. In this case $\iota$ restricts to
an involution of the basis of the fibration. We say that $\iota$
preserves the fibration. 
\item $\iota$ does not preserve the
fibration, and in particular $\iota^*([F^{(n)}])=[F^{(n')}]$ where
$[F^{(n')}]$ is the class of another elliptic fibration on $X$,
$\mathcal{E}^{(n')}:X\ra\mathbb{P}^1$, which is not
$\mathcal{E}^{(n)}$. In particular $[F^{(n)}]\neq [F^{(n')}]\in
NS(X)$.\end{enumerate}

\begin{definition} We say that an elliptic fibration on $X$ is of type 1), 2), 3) with respect
to $\iota$ if $\iota$ preserves the fibers, preserves the fibration
but not the fibers, and does not preserve the fibration
respectively.
\end{definition}

\begin{theorem}\label{theor: pencils induced on R by elliptic fibrations on X}
Let $\mathcal{E}^{(n)}:X\ra\mathbb{P}^1$ be an elliptic fibration on $X$.
If $\mathcal{E}^{(n)}$ is of type 1) with respect to $\iota$ then
$\mathcal{E}^{(n)}$ is induced by a generalized conic bundle on
$\widetilde{R}$.

If $\mathcal{E}^{(n)}$ is of type 2) with respect to $\iota$, then
$\mathcal{E}^{(n)}$ is induced by a splitting genus 1 pencil.

If $\mathcal{E}^{(n)}$ is of type 3) with respect to $\iota$, then
$\mathcal{E}^{(n)}$ does not induce a fibration on $\widetilde{R}$
and, more precisely, the image $\pi(F^{(n)}_t)$, $t\in\mathbb{P}^1$
is a 1-dimensional non-complete linear system.

\end{theorem}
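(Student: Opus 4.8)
The plan is to treat the three cases separately, in each case analysing the action of $\iota$ on a generic fiber $F^{(n)}_t$ and studying its image $D_t:=\pi(F^{(n)}_t)\subset\widetilde{R}$. The tools I would use throughout are the projection formula for the double cover, $\pi^*A\cdot\pi^*B=2\,A\cdot B$ and $\pi^*A\cdot\pi^{-1}(\text{curve})$ bookkeeping, adjunction on $\widetilde{R}$, and the fact that $\iota$ is non-symplectic. In each of the first two cases the outcome will be a base-point-free pencil on $\widetilde{R}$, which I then recognise as the relevant object via Proposition \ref{prop: (generalized) conic bundles and splitting genus 1 pencil induces genus 1 fibration}.

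For type 1), since $\iota$ acts trivially on the base it preserves $F^{(n)}_t$, and writing the holomorphic $2$-form locally as $db\wedge\eta$ with $b$ a base coordinate, non-symplecticity forces $\iota^*\eta=-\eta$, i.e.\ $\iota$ acts as $-1$ on $H^{1,0}(F^{(n)}_t)$. Hence $\iota|_{F^{(n)}_t}$ is an inversion with quotient $\mathbb{P}^1$, so $D_t$ is a smooth rational curve and $\pi^*D_t=F^{(n)}_t$. From $(\pi^*D_t)^2=2D_t^2$ I obtain $D_t^2=0$, and adjunction on $\widetilde{R}$ gives $D_t\cdot(-K_{\widetilde{R}})=2$. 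Since the pencil $\{D_t\}$ covers $\widetilde{R}$ with $D_t\cdot D_{t'}=D_t^2=0$, it is base-point free and defines a generalized conic bundle inducing $\mathcal{E}^{(n)}$. For type 2), for generic $t$ one has $\sigma(t)\neq t$ and $\iota$ restricts to an isomorphism $F^{(n)}_t\xrightarrow{\sim}F^{(n)}_{\sigma(t)}$ onto a disjoint fiber, so $\pi|_{F^{(n)}_t}$ is an isomorphism onto $D_t$, a smooth genus $1$ curve, with $\pi^*D_t=F^{(n)}_t+F^{(n)}_{\sigma(t)}$. As distinct fibers are disjoint, $(\pi^*D_t)^2=0$, so $D_t^2=0$ and adjunction yields $D_t\cdot(-K_{\widetilde{R}})=0$; the resulting base-point-free morphism $\widetilde{R}\to\mathbb{P}^1$ is a splitting genus $1$ pencil, and by Lemma \ref{lemma on splitting genus 1 pencil} and Proposition \ref{prop: (generalized) conic bundles and splitting genus 1 pencil induces genus 1 fibration} the fibration it induces is $\mathcal{E}^{(n)}$.

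For type 3), $\iota$ sends $F^{(n)}_t$ isomorphically onto a fiber $F^{(n')}_{t'}$ of the distinct fibration $\mathcal{E}^{(n')}$, so $\pi^*D_t=F^{(n)}_t+F^{(n')}_{t'}$ and $D_t^2=F^{(n)}\cdot F^{(n')}$. The crucial input is that $[F^{(n)}]$ and $[F^{(n')}]$ are nef, isotropic and, being distinct primitive classes, non-proportional; by the Hodge index theorem on the signature $(1,\rho-1)$ lattice $\mathrm{NS}(X)$ a pair of non-proportional isotropic classes cannot be orthogonal, so $F^{(n)}\cdot F^{(n')}>0$. Thus $D_t^2>0$ and distinct members $D_t,D_{t'}$ meet, which already shows $\{D_t\}$ is not the fiber family of any fibration, so $\mathcal{E}^{(n)}$ induces no fibration on $\widetilde{R}$. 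For the non-completeness, I would write $F^{(n)}_t=\{\lambda s_0+\mu s_1=0\}$; then $D_t$ is cut out by $(\lambda s_0+\mu s_1)\,\iota^*(\lambda s_0+\mu s_1)$, which depends quadratically on $[\lambda:\mu]$, exhibiting $\{D_t\}$ as the Veronese image of $\mathbb{P}^1$ inside $|D|$ and giving $h^0(\widetilde{R},\mathcal{O}(D))\geq 3$. Hence the one-dimensional family $\pi(F^{(n)}_t)$ spans a non-complete linear system.

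I expect the type 3) analysis to be the main obstacle. The positivity $F^{(n)}\cdot F^{(n')}>0$, which is what prevents the image from being a fibration, and the identification of $\pi(F^{(n)}_t)$ as a genuinely non-complete system both require care; the essential point is the quadratic (Veronese) dependence of the defining sections on the pencil parameter, together with the Hodge-index positivity, and it is in phrasing these correctly, rather than in the comparatively routine numerical computations of cases 1) and 2), that the real work lies.
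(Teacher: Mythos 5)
Your proposal is correct and follows the same casewise strategy as the paper's proof: restrict $\iota$ to a general fiber, identify the quotient curve $D_t=\pi(F^{(n)}_t)$, and compute $D_t^2$ and $D_t\cdot K_{\widetilde{R}}$ via the projection formula and adjunction. Cases 1) and 2) agree with the paper in substance (the paper excludes a fixed-point-free action on the fibers by observing that a translation preserves the period of the fiber and would force $\iota$ to be symplectic; your computation $\iota^*\eta=-\eta$ is the same fact phrased on the level of the $2$-form). The genuine divergence is in case 3), precisely on the two points you flag. For $F^{(n)}\cdot F^{(n')}>0$ the paper only remarks that the classes are distinct and represented by effective divisors supported on smooth curves; your Hodge-index argument (a lattice of signature $(1,\rho-1)$ contains no $2$-dimensional totally isotropic subspace, and nefness rules out negative intersection) is a cleaner and fully rigorous justification of the same inequality. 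For non-completeness the paper works upstairs on $X$: it shows $|F^{(n)}+F^{(n')}|$ is a base-point-free complete system of dimension $m+1\geq 2$ containing a smooth genus $m+1$ curve (via Saint-Donat), while the curves $F^{(n)}_t+F^{(n')}_t$ form only a $1$-dimensional family, and then pushes this down. Your Veronese argument works directly on $\widetilde{R}$: the invariant sections $(\lambda s_0+\mu s_1)\,\iota^*(\lambda s_0+\mu s_1)$ depend quadratically on $[\lambda:\mu]$, and since $t\mapsto D_t$ is injective the three coefficient sections must be linearly independent, giving $h^0(\widetilde{R},\mathcal{O}(D))\geq 3$ outright; this is sharper, and it makes explicit that $\{D_t\}$ traces a conic rather than a line in $|D|$, which is what both you and the paper mean by ``non-complete''. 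The only step worth a sentence of justification is that an $\iota$-invariant section of $\pi^*\mathcal{O}_{\widetilde{R}}(D)$ descends to a section of $\mathcal{O}_{\widetilde{R}}(D)$ (the invariant summand of $\pi_*\pi^*\mathcal{O}(D)$), which is what lets you read the quadratic dependence on $\widetilde{R}$.
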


\proof Let us consider the generic (smooth) fiber $F^{(n)}_t$ of
$\mathcal{E}^{(n)}$.

If $\mathcal{E}^{(n)}$ is of type 1) with respect to $\iota$, then
$\iota$ preserves the curve $F^{(n)}_t$ and acts on it by fixing 4
points. Indeed the involutions on a smooth curve of genus 1 are either
fixed points free or fix 4 points. The ones which are fixed
point free are translations and preserve the period of the
elliptic curve. But since $\iota$ acts as the identity on the base
of the fibration $\mathcal{E}^{(n)}$, if $\iota$ restricted to the
fibers preserves the period of the fibers, then $\iota$ preserves
the period of the surface $X$. This is not possible, because
$\iota$ is a non-symplectic involution. So $\iota$ acts on each
fiber with 4 fixed points (i.e. it acts as the hyperelliptic
involution, possibly composed with some translations). Let us
now consider the image of $F^{(n)}_t$ on $\widetilde{R}$ under the quotient
map $\pi:X\ra \widetilde{R}\simeq X/\iota$. The curve
$D_t^{(n)}:=\pi(F^{(n)}_t)$ is a rational curve, since
$\pi:F^{(n)}_t\ra D^{(n)}_t$ is a $2:1$ cover branched in 4
points. So $\mathcal{E}^{(n)}$ induces a pencil of rational curves
on $\widetilde{R}$ and we denote by $[D^{(n)}]$ the class of the
fiber of this pencil. The self-intersection $[D^{(n)}]^2=0$, since $[D^{(n)}]$ is the class of a fiber of a fibration. On the other hand $g(D^{(n)})=0$ and so, by adjunction, $D^{(n)}K_{\widetilde{R}}=-2$. Thus the pencil $D^{(n)}_t$ is a generalized conic bundle.

If $\mathcal{E}^{(n)}$ is of type 2) with respect to $\iota$, then $\iota$ is an involution of the basis. Since the basis of the fibration is $
\mathbb{P}^1$, the involution $\iota$ fixes two points on it, say $t_1$ and $t_2$. It preserves therefore two fibers of the fibration and switches the others. Let $F^{(n)}_t$ and $F^{(n)}_{t'}$ be two smooth fibers such that $\iota(F^{(n)}_t)=F^{(n)}_{t'}$ and consider $C^{(n)}_t:=\pi(F^{(n)}_t)$. The inverse image of $C^{(n)}_t$ by $\pi$ consists of two disjoint smooth genus 1 curves (the fibers $F^{(n)}_t$ and $F^{(n)}_{t'}$). Hence $C^{(n)}_t$ is a copy of $F^{(n)}_t$ and is therefore a smooth curve of genus 1. Moreover $[C^{(n)}_t][C^{(n)}_t]=0$, since $[F^{(n)}_t][F^{(n)}_t]=0$. So $\mathcal{E}^{(n)}$ induces a genus 1 fibration on $\widetilde{R}$. By adjunction, since $C^{(n)}_t$ is a smooth curve of genus 1 and $[C^{(n)}_t][C^{(n)}_t]=0$, we have $[C^{(n)}_t]K_{\widetilde{R}}=0$, and therefore $\mathcal{E}^{(n)}$ induces a genus 1 pencil on $\widetilde{R}$. 

If $\mathcal{E}^{(n)}$ is of the type 3) with respect to $\iota$,
then there exists another elliptic fibration $\mathcal{E}^{(n')}$ on
$X$ such that $\iota^*([F^{(n)}])=[F^{(n')}]$. Since
$[F^{(n)}]\not \sim [F^{(n')}]$, where $\sim$ is the linearly
equivalence, and both $[F^{(n)}]$ and $[F^{(n')}]$ are classes of
effective divisors supported on smooth curves, it holds that $[F^{(n)}][F^{(n')}]>0$. Let us
denote by $m$ the positive integer $m:=[F^{(n)}][F^{(n')}]$. We
observe that $[F^{(n)}+F^{(n')}]^2=2m>0$ and that
$[F^{(n)}+F^{(n')}]$ is an effective divisor. The linear system
$|F^{(n)}|$ has no base points, hence $|F^{(n)}+F^{(n')}|$ is a
complete linear system base points free. In particular there
exists a smooth genus $m+1$ curve $C_X$ whose class is
$[F^{(n)}+F^{(n')}]$. Therefore
$|C_X|=|F^{(n)}+F^{(n')}|$ is a $m+1$-dimensional complete linear
system whose general element is smooth (cf.\ \cite[Proposition
2.6]{SD}). On the other hand, if we consider the set of curves of
the form $F^{(n)}_t+F^{(n')}_t$, this is a 1-dimensional space,
parametrized by $\mathbb{P}^1_t$, whose generic element is a
singular reducible curve (indeed it splits in the sum of two
smooth genus 1 curve meeting in $m$ points). Let us denote by
$D_t:=\pi(F^{(n)}_t)=\pi(F^{(n')}_t)$. 
So $F^{(n)}_t\ra\mathbb{P}^1$ induces a pencil of curves
$D_t\ra\mathbb{P}^1$ on $\widetilde{R}$. This pencil is a non-complete sub-linear system of the complete linear system induced
by the $(m+1)$-dimensional linear system $|C_X|$ on $X$. Indeed $\pi(C_X)$ is a smooth curve on $\widetilde{R}$ and $[\pi(C_X)]=[\pi(F^{(n)})_t+\pi(F^{(n')})_t]$ so that its linear system contains the non-complete sub-linear system given by $2D_t$.\endproof

Let $\mathcal{E}^{(n)}$ be an elliptic fibration on $X$ of type 2). 
Then $\iota$ generically switches pairs of fibers and preserves exactly two fibers, denoted by $F_{t_1}^{(n)}$ and $F_{t_2}^{(n)}$.
The fibration $\mathcal{E}^{(n)}$ induces a non necessarily relatively minimal elliptic fibration on $\widetilde{R}$ such that
each fiber of this elliptic fibration, with the exception of $\pi(F^{(n)}_{t_1})$ and $\pi(F^{(n)}_{t_2})$, splits in the double cover $\pi:X\ra \widetilde{R}$. So each curve $C^{(n)}_t:=\pi(F_t^{(n)})\subset \widetilde{R}$, possibly with the exception of $t=t_1$ and $t=t_2$, intersects the branch locus of $\pi:X\ra \widetilde{R}$ with even multiplicity in each intersection point, thus $C^{(n)}_t$ intersects $\widetilde{(F_R)_{b_1}}\cup \widetilde{(F_R)_{b_2}}$ with even multiplicity in each intersection point. Now we consider the blow down $\beta_1\circ\beta_2:\widetilde{R}\ra\mathbb{P}^2$. It sends $\widetilde{(F_R)_{b_i}}$, $i=1,2$ to $\beta_1({(F_R)_{b_i}})$, $i=1,2$ and $C^{(n)}_t$ to a not necessarily smooth curve of genus 1 of $\mathbb{P}^2$ passing through at least 9 points among $\{P_1,\ldots, P_9,Q_1,\ldots Q_r\}$. Since $\beta_1\circ\beta_2\circ\pi:X\ra\mathbb{P}^2$ is a 2:1 cover branched along  $\beta_1((F_R)_{b_1})\cup \beta_1((F_R)_{b_2})$, and the generic fiber of the pencil of cubics  $\{C^{(n)}_t\}_t\in\mathbb{P}^1$ splits on $X$, we deduce that almost all the fibers of $\{C^{(n)}_t\}_t\in\mathbb{P}^1$ intersect $\beta_1((F_R)_{b_1})\cup \beta_1((F_R)_{b_2})$ with an even multiplicity in each intersection point. This characterizes the pencil of curves on $\mathbb{P}^2$ which are induced by genus 1 fibration of second type on $X$. Viceversa, we already observed that a pencil of not necessarily smooth curves of genus 1 on $\mathbb{P}^2$ induces, under some conditions, a splitting genus 1 pencil on $\widetilde{R}$ and thus a genus 1 fibration of second type on $X$.

Let $\mathcal{E}^{(n)}$ be an elliptic fibration of type 3), $F^{(n)}_t$ be a generic fiber of $\mathcal{E}^{(n)}$ and $D_t:=\pi(F^{(n)}_t)$. Denote by $F^{(n')}_t$ the curve $\iota(F^{(n)}_t)$ and $\mathcal{E}^{(n')}$ the elliptic fibration whose fiber over $t$ is $F^{(n')}_t$. Then $D_t=\pi(F^{(n)}_t)=\pi(F^{(n')}_t)$.
By the projection formula, $\left(\pi^*(D_t)\right)^2=2 \left(D_t\right)^2$, so $(F^{(n)}_t+F^{(n')}_t)^2=2(D_t)^2$ and thus $(D_t)^2=m$. Let us denote by $B_R$ the branch locus of $\pi:X\ra\widetilde{R}$ and $B_X=\pi^{-1}(B_R)$. Then $(F^{(n)}_t)\cdot B_X=(F^{(n')}_t)\cdot B_X=m$ and $D_t\cdot B_R=m$. Since $X$ has trivial canonical bundle, the canonical bundle of $\widetilde{R}$ is $K_{\widetilde{R}}=-B_R/2$ so that $D_t\cdot (-K_{\widetilde{R}})=D_t\cdot (2B_R)=2m$.

Let us now consider $C_X$, a smooth curve in the complete linear system $|F^{(n)}+F^{(n')}|$. It is smooth of genus $m+1$ and its image under $\pi$ is the smooth curve $C_R:=\pi(C_X)$. The map $\pi$ restricted to $C_X$ is a 2:1 cover $C_X\ra C_R$, branched in $C_X\cdot B_X=2m$ points. We are able to compute the genus of $C_R$ by the Riemann--Hurwitz formula: 
\begin{eqnarray}\label{eq: Riemann-Hurwitz}2g(C_X)-2=2(2g(C_R)-2)+2m,\end{eqnarray}
which implies that $g(C_R)=1$.

Similar results can be obtained also by adjunction formula. Indeed $(C_R)^2=m$, since $2m=(C_X)^2=\left(\pi^*(C_R)\right)^2=2 \left(C_R\right)^2$ and $C_R\cdot K_R=-(C_R\cdot B_R)/2=-m$.

Thus the singular curves $D_t\subset\widetilde{R}$ are a sub-system of a complete linear system of generically smooth curves of genus 1, meeting the branch locus in $2m$ points. 

We observe that the non-complete linear system associated to fibration of type $3)$ are a generalization of the complete linear system associated to the fibration of type 2), i.e. of the splitting genus 1 pencil. Indeed, if we consider the description of the linear system associated to fibration of type 3) and we allow $m$ to be 0, then the curves $D_t$ defined above are such that $D_t$ is a curve of genus 1 and $D_t\cdot K_{\widetilde{R}}=0$, thus $D_t^2=0$. This is exactly what we require to define the splitting genus 1 pencil. This is consistent with the geometric interpretation of these linear systems. Indeed if we allows $m$ to be 0, this means that $F^{(n)}\cdot \iota(F^{(n)})=0$. The consequence is that $F^{(n)}$ and $\iota(F^{(n)})$ are classes of the fiber of the same fibration, and so $\iota$ preserves the fibration defined by $F^{(n)}$. By definition this means that the fibration $\mathcal{E}^{(n)}$ is of type 2) and hence associated to a splitting genus 1 pencil. 

\subsection{Admissible elliptic fibrations on $X$}
We now analyze deeply the geometric properties related to the presence (or the absence) of a certain type of elliptic fibration on $X$. In particular, we show that elliptic fibrations of type 3) cannot appear if $\iota$ is the identity on the N\'eron--Severi group of $X$, which implies in turn that $R$ has at most two reducible fibers, and that the fibrations of type 2) are reduced to the fibration $\mathcal{E}_X$ if the cover $X\dashrightarrow R$ is branched on at least one smooth fiber of the fibration $\mathcal{E}_R$.

\begin{corollary}\label{corollary: iota is id on NS}
If the map $\iota$ restricted to the N\'eron--Severi group of
$X$ is the identity, then there are no reducible fibers of
$\mathcal{E}_R\ra \mathbb{P}^1$ outside possibly $(F_R)_{b_1}$
and $(F_R)_{b_2}$. 

If $\iota$ is the identity on $NS(X)$, then all elliptic
fibrations on $X$ are induced either by generalized conic bundles
on $\widetilde{R}$ or by splitting genus 1 pencils.
\end{corollary}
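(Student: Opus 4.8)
The plan is to treat the two assertions separately, deriving the second almost immediately from Theorem \ref{theor: pencils induced on R by elliptic fibrations on X} and reserving the real work for the first. For the second assertion I would argue via the trichotomy of elliptic fibrations according to the action of $\iota$. If $\iota^*$ is the identity on $NS(X)$, then for every elliptic fibration $\mathcal{E}^{(n)}$ with fiber class $[F^{(n)}]$ we have $\iota^*[F^{(n)}]=[F^{(n)}]$; in particular $\iota$ cannot send $[F^{(n)}]$ to the class of a \emph{different} fibration, so no fibration can be of type 3). Hence every elliptic fibration on $X$ is of type 1) or type 2), and Theorem \ref{theor: pencils induced on R by elliptic fibrations on X} then identifies it as induced by a generalized conic bundle (type 1) or by a splitting genus 1 pencil (type 2). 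This settles the second statement.

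For the first assertion I would argue by contraposition, assuming that $\mathcal{E}_R$ has a reducible fiber $(F_R)_{t_0}$ with $t_0\notin\{b_1,b_2\}$ and producing two classes in $NS(X)$ that $\iota^*$ interchanges. Since $f$ is branched only over $b_1,b_2$, the point $t_0$ has two distinct preimages $t_0',t_0''$ exchanged by the deck involution $\sigma$ of $f$, and $f$ is \'etale over $t_0$; away from the branch fibers $X$ agrees with the fiber product $R\times_{\mathbb{P}^1}\mathbb{P}^1$, so the fibers of $\mathcal{E}_X$ over $t_0'$ and $t_0''$ are both isomorphic to the reducible fiber $(F_R)_{t_0}$ (no resolution being needed there). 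The cover involution $\iota$ descends on the base of $\mathcal{E}_X$ to $\sigma$, because $\pi$ covers $f$ on the bases; hence $\iota$ carries the fiber over $t_0'$ isomorphically onto the fiber over $t_0''$, permuting their components accordingly.

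Next I would invoke the structure of the trivial lattice of $\mathcal{E}_X$. A non-identity component $\Theta'$ of the reducible fiber over $t_0'$ and its image $\Theta''=\iota(\Theta')$, a non-identity component of the fiber over $t_0''$, are non-identity fiber components lying over distinct base points, and such components span orthogonal summands of the trivial lattice and are therefore linearly independent, hence distinct, in $NS(X)$ (the Shioda--Tate description, cf.\ \cite{SS}). Consequently $\iota^*[\Theta']=[\Theta'']\neq[\Theta']$, contradicting $\iota^*=\mathrm{id}$ on $NS(X)$, so no such reducible fiber exists.

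The main obstacle, and essentially the only step requiring genuine care, is the claim that non-identity components of fibers over different points stay independent in $NS(X)$; this rests on the Shioda--Tate analysis of $NS(X)$ for the elliptic K3 surface $\mathcal{E}_X$, and one must check that $\Theta'$ and $\Theta''$ really are non-identity components, which holds precisely because $(F_R)_{t_0}$, and hence each of its two \'etale copies, is reducible. Everything else is formal.
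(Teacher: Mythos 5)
Your proposal is correct and follows essentially the same route as the paper: type 3) fibrations are excluded because $\iota^*$ would move the fiber class, and a reducible fiber of $\mathcal{E}_R$ away from $b_1,b_2$ lifts to two reducible fibers of $\mathcal{E}_X$ swapped by $\iota$, whose components have distinct classes in $NS(X)$. The only difference is that you spell out the fiber-product and Shioda--Tate details that the paper leaves implicit.
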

\proof If $\mathcal{E}_R$ has at least one reducible fiber outside
$(F_R)_{b_1}$ and $(F_R)_{b_2}$, then the elliptic fibration
$\mathcal{E}_X$ has two fibers of the same type, switched by
$\iota$. Since the classes of irreducible components of two
different reducible fibers are different classes in the
N\'eron--Severi group, the action of $\iota^*$ on the classes of
the components of these two reducible fibers cannot be the
identity.

If $\mathcal{E}^{(n)}$ is a fibration of type 3) on $X$, then
$\iota^*$ sends the class of the fiber $[F^{(n)}]$ to the class of
another elliptic fibration, say $[F^{(n')}]$. Hence
$\iota^*$ does not act as the identity for example on the class
$[F^{(n)}]\in NS(X)$.\endproof

\begin{corollary}\label{corollary: Fb1 and Fb2 smooth} If the fibers $(F_R)_{b_1}$ and $(F_R)_{b_2}$
of the fibration $\mathcal{E}_R:R\ra\mathbb{P}^1$ are smooth genus
1 curves, then an elliptic fibration on $X$ satisfies exactly one
of the following:
\begin{enumerate}\item is induced by a conic bundle on $R$ (and in this case the fibration is of type 1) with respect to
$\iota$); \item coincides with $\mathcal{E}_X$ (which is the
unique fibration of type 2) with respect to $\iota$); \item is of
type 3) with respect to $\iota$ and $m$ is even (where $m$ is as
\eqref{eq:  Riemann-Hurwitz}).\end{enumerate}

If at least one of the fibers $(F_R)_{b_1}$ and $(F_R)_{b_2}$ of
the fibration $\mathcal{E}_R:R\ra\mathbb{P}^1$ is a smooth genus 1
curve, then any elliptic fibration on $X$ which is of type 2)
with respect to $\iota$ coincides with $\mathcal{E}_X$.
\end{corollary}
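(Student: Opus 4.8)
The plan is to read off the three cases from the trichotomy of Theorem~\ref{theor: pencils induced on R by elliptic fibrations on X}. Every elliptic fibration $\mathcal{E}^{(n)}$ on $X$ is of exactly one of the types 1), 2), 3) with respect to $\iota$; since these types are mutually exclusive, it suffices to match each type with the corresponding item, which automatically yields the ``exactly one'' clause. First I would record that when both $(F_R)_{b_1}$ and $(F_R)_{b_2}$ are of type $I_0$ no blow-up is needed to resolve the double cover, so $\widetilde{R}=R$ and $-K_{\widetilde R}=-K_R=F_R$ is the class of a fibre of $\mathcal{E}_R$. Hence Definitions~\ref{defi: conic bundle} and~\ref{defi: generalized conic bundle} coincide, and a type 1) fibration---which by Theorem~\ref{theor: pencils induced on R by elliptic fibrations on X} is induced by a generalized conic bundle on $\widetilde R$---is induced by an honest conic bundle on $R$. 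This is item (1).

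For the type 2) case I would prove the stronger second assertion, assuming only that one branch fibre, say $(F_R)_{b_1}$, is smooth. By the construction of the double cover the ramification locus $B_X=\mathrm{Fix}(\iota)$ then contains, as one of its components, a smooth genus $1$ curve $\Gamma_1$, namely the fibre of $\mathcal{E}_X$ lying over the branch point $f^{-1}(b_1)$. If $\mathcal{E}^{(n)}$ is of type 2), then $\iota$ acts on the base as an involution with two fixed points $t_1,t_2$, and by Lemma~\ref{lemma on splitting genus 1 pencil} every fibre other than $F^{(n)}_{t_1}$ and $F^{(n)}_{t_2}$ splits into two disjoint copies under $\pi$ and is therefore disjoint from $B_X$. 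Consequently $B_X\subset F^{(n)}_{t_1}\cup F^{(n)}_{t_2}$, so the irreducible curve $\Gamma_1$ is a component of one preserved fibre, say $F^{(n)}_{t_1}$. Now $\Gamma_1^2=0$ by adjunction, and $[\Gamma_1]$ and $[F^{(n)}]$ are nef classes of self-intersection $0$ with $[\Gamma_1]\cdot[F^{(n)}]=0$; by the Hodge index theorem they are proportional, and since $[F^{(n)}]$ is primitive and $[\Gamma_1]\le [F^{(n)}_{t_1}]=[F^{(n)}]$ as effective classes, this forces $[\Gamma_1]=[F^{(n)}]$. As $\Gamma_1$ is also a fibre of $\mathcal{E}_X$ we get $[F^{(n)}]=[F_X]$, and since an elliptic fibration on a K3 surface is the map attached to the linear system of its fibre class, $\mathcal{E}^{(n)}=\mathcal{E}_X$. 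Because $\mathcal{E}_X$ is itself of type 2) (on its base $\iota$ is the deck involution of $f$, fixing the two branch points), this proves that $\mathcal{E}_X$ is the unique type 2) fibration, giving item (2) and the final assertion.

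For item (3) let $\mathcal{E}^{(n)}$ be of type 3), with $m$ as in~\eqref{eq: Riemann-Hurwitz}; recall from the discussion following Theorem~\ref{theor: pencils induced on R by elliptic fibrations on X} that $F^{(n)}_t\cdot B_X=m$. Since both branch fibres are smooth, $B_X$ is the union of the two ramification curves $\Gamma_1,\Gamma_2$, each a smooth fibre of $\mathcal{E}_X$, so $[\Gamma_1]=[\Gamma_2]=[F_X]$ in $\NS(X)$. Therefore
\[
m=F^{(n)}_t\cdot B_X=F^{(n)}_t\cdot\Gamma_1+F^{(n)}_t\cdot\Gamma_2=2\,[F^{(n)}]\cdot[F_X],
\]
which is even. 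This is item (3), and combined with the trichotomy it gives ``exactly one''.

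I expect the main obstacle to be the type 2) step: one must argue carefully that a smooth genus $1$ ramification curve, once it is known to lie inside a fibre of the genus $1$ fibration $\mathcal{E}^{(n)}$, in fact exhausts that fibre---this is where primitivity of the fibre class and the Hodge index theorem enter---and then pass from equality of fibre classes to equality of the fibrations. Once this is in place, the parity in item (3) is the short numerical consequence displayed above, and item (1) is immediate from $\widetilde R=R$.
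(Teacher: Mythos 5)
Your proof is correct and follows essentially the same route as the paper's: item (1) via $\widetilde{R}=R$ collapsing the two notions of conic bundle, the type 2) uniqueness via the observation that $\mathrm{Fix}(\iota)$ must lie in the two preserved fibres so the genus 1 ramification curve is a fibre of $\mathcal{E}^{(n)}$, and the parity of $m$ from $B_X$ being the sum of two fibres of $\mathcal{E}_X$ each meeting $F^{(n)}_t$ in $m'=m/2$ points. The only divergence is cosmetic: you derive item (2) directly from the second assertion and justify with Hodge index/primitivity the step that the paper merely asserts (that a smooth genus 1 curve contained in a fibre of a genus 1 fibration on a K3 is the whole fibre), whereas the paper also offers the alternative argument that $R=\widetilde{R}$ admits no splitting genus 1 pencil other than $\mathcal{E}_R$.
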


\proof If $(F_R)_{b_1}$ and $(F_R)_{b_2}$ are smooth curves of
genus 1, then $R=\widetilde{R}$, so the notion of conic bundle and
of generalized conic bundle coincide. Moreover, we already observed
that there exists a unique elliptic fibration on $R$ and there are
no splitting genus 1 pencil on $R$ (which are not $\mathcal{E}_R$), so the unique elliptic fibration of type 2) is $\mathcal{E}_X$. It remains to prove that if there exists an
elliptic fibration on $X$ which is of the third type for $\iota$,
then $m$ is even. Let us denote by
$\mathcal{E}^{(n)}:X\ra\mathbb{P}^1$ an elliptic fibration of type 3) on $X$ and by $\mathcal{E}^{(n')}:X\ra\mathbb{P}^1$ the
elliptic fibration which is the image of $\mathcal{E}^{(n)}$ for
$\iota$. Let $F^{(n)}_t$ be a fiber of $\mathcal{E}^{(n)}$ and let
$F^{(n')}_t=\iota(F^{(n)}_t).$ The intersection points of $F^{(n)}_t$
and $F^{(n')}_{t}$ are contained in the branch locus of
$X\ra\widetilde{R}=R$, and therefore are contained in
$\pi^{-1}((F_R)_{b_1}\cap (F_R)_{b_2})$. Viceversa, each
intersection among $F^{(n)}$ and the branch locus of $X\ra R$ is a
fixed point for $\iota$ and thus is an intersection point of
$F^{(n)}_t$ and $F^{(n')}_t$. We observe that both
$\pi^{-1}((F_R)_{b_1})$ and $\pi^{-1}((F_R)_{b_2})$ are smooth
fibers of the fibration $\mathcal{E}_X:X\ra\mathbb{P}^1$, hence
$[F^{(n)}_t][\pi^{-1}((F_R)_{b_1})]=[F^{(n)}_t][\pi^{-1}((F_R)_{b_2})]=m'$
and so
$[F^{(n)}_t][\pi^{-1}((F_R)_{b_1})+\pi^{-1}((F_R)_{b_2})]=2m'=m$.

If at least one of the fibers $(F_R)_{b_1}$ and $(F_R)_{b_2}$ is
smooth, then $\iota$ fixes at least one curve of genus 1,
say $\pi^{-1}((F_R)_{b_1})$. Let $\mathcal{E}^{(n)}$ be an
elliptic fibration on $X$ which is of type 2) with respect to
$\iota$. Since $\iota$ is an involution of the basis, the fixed
locus of $\iota$ is contained in the two fibers preserved by
$\iota$. Hence $\pi^{-1}((F_R)_{b_1})$ is a smooth curve of
genus one contained in the fiber of a genus 1 fibration. This
suffices to conclude that $\pi^{-1}((F_R)_{b_1})$ is a fiber of
the fibration and hence $|\pi^{-1}((F_R)_{b_1})|$ induces the
fibration $\mathcal{E}^{(n)}$ on $X$. By definition
$\mathcal{E}_X$ is the fibration induced by
$|\pi^{-1}((F_R)_{b_1})|$, so $\mathcal{E}_X$ and
$\mathcal{E}^{(n)}$ coincide.
\endproof
\begin{corollary}\label{corollary: Fb1Fb2 smooth+iota is on NS} If the fibers $(F_R)_{b_1}$ and $(F_R)_{b_2}$ of
$\mathcal{E}_R:R\ra\mathbb{P}^1$ are smooth genus 1 curves, and
$\iota^*$ is the identity on $NS(X)$, then a fibration which is
not $\mathcal{E}_X$ is induced by a conic bundle on
$R$.

The above conditions can be realized only if
$\mathcal{R}:R\ra\mathbb{P}^1$ is an elliptic fibration without
reducible fibers.\end{corollary}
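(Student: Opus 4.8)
The plan is to observe that both assertions are immediate consequences of the two preceding corollaries, so the work consists in assembling those statements correctly rather than proving something genuinely new. First I would record the structural simplification coming from the hypothesis on the branch fibers: since $(F_R)_{b_1}$ and $(F_R)_{b_2}$ are smooth, no singular point needs to be blown up in the construction of $X$, so $\widetilde{R}=R$ and the notions of conic bundle and of generalized conic bundle coincide.

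For the first assertion, I would combine the two corollaries to rule out every type of fibration except $\mathcal{E}_X$ and the conic bundles. Concretely, Corollary \ref{corollary: iota is id on NS} gives that, under $\iota^*=\mathrm{id}$ on $NS(X)$, every elliptic fibration on $X$ is induced either by a generalized conic bundle on $\widetilde{R}$ (equivalently, by the identification above, a conic bundle on $R$) or by a splitting genus 1 pencil; in particular no fibration of type 3) occurs. Then Corollary \ref{corollary: Fb1 and Fb2 smooth} --- whose second part requires only one branch fiber to be smooth --- shows that the unique fibration of type 2), that is, the only one arising from a splitting genus 1 pencil, is $\mathcal{E}_X$ itself. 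Putting these together, any elliptic fibration that is not $\mathcal{E}_X$ must be of type 1), hence induced by a conic bundle on $R$.

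For the second assertion, I would appeal directly to the first part of Corollary \ref{corollary: iota is id on NS}, which states that the hypothesis $\iota^*=\mathrm{id}$ on $NS(X)$ forces every reducible fiber of $\mathcal{E}_R$ to lie among $(F_R)_{b_1}$ and $(F_R)_{b_2}$. Since by assumption these two fibers are smooth, and hence irreducible, $\mathcal{E}_R$ can have no reducible fibers at all.

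The proof presents essentially no obstacle: everything is a bookkeeping exercise on the three types of fibrations distinguished in Theorem \ref{theor: pencils induced on R by elliptic fibrations on X}. The only point that requires care is to make sure both exclusion mechanisms are invoked --- type 3) is eliminated by $\iota^*=\mathrm{id}$ on $NS(X)$, while the type 2) fibrations other than $\mathcal{E}_X$ are eliminated by the smoothness of the branch fibers --- so that what remains is precisely the conic bundles on $R$ together with $\mathcal{E}_X$.
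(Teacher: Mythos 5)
Your proposal is correct and follows exactly the route the paper intends: the paper gives no separate proof of this corollary, treating it as an immediate consequence of Corollary \ref{corollary: iota is id on NS} (which kills type 3) fibrations and reducible fibers away from the branch fibers) and Corollary \ref{corollary: Fb1 and Fb2 smooth} (which identifies the unique type 2) fibration with $\mathcal{E}_X$ and, via $\widetilde{R}=R$, identifies generalized conic bundles with conic bundles). Your bookkeeping of the trichotomy from Theorem \ref{theor: pencils induced on R by elliptic fibrations on X} is exactly the intended argument.
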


\begin{proposition}\label{prop NS in case iota is id on NS}
Let $X$ and $\iota$ be as in Corollary \ref{corollary: iota is id on NS}, i.e., $\iota$ acts as the identity on $NS(X)$. This implies that $NS(X)$ is a 2-elementary lattice and thus it is identified by its rank $r$, by its length $a$ and by the number $\delta\in\{0,1\}$ (see \cite{Nik non sympl} for the precise definition).

If $\iota$ fixes 2 curves of genus 1, then it
satisfies the more restrictive hypothesis of Corollary
\ref{corollary: Fb1Fb2 smooth+iota is on NS} and $NS(X)\simeq
U\oplus E_8(-2)$, which is the unique lattice associated to the invariant $(r,a,\delta)=(10,8,0)$. In this case $\mathcal{R}:R\ra\mathbb{P}^1$ is an elliptic fibration without reducible fibers.

If $\iota$ fixes one curve of genus 1 and possibly some rational curves, then the invariant of $NS(X)$ are $(r,a,\delta)=(r,20-r, \delta)$ with $11\leq r\leq 19$, the fiber $(F_{R})_{b_1}$ is of type $I_0$ and $(F_{R})_{b_2}$ is singular. The number and type of reducible fibers of
$\mathcal{R}:R\ra\mathbb{P}^1$ are uniquely determined by $(r,a,\delta)$ and they are: if $(r,a,\delta)=(r,20-r,1)$ or $(r,a,\delta)=(18,2,0)$, then generically the singular fibers of $\mathcal{R}$ are $I_{r-10}+(22-r)I_1$, $\rk(MW(\mathcal{R}))=19-r$ and $(F_{R})_{b_2}$ is a fiber of type $I_{r-10}$;
if $(r,a,\delta)=(14,6,0)$, then generically the singular fibers of $\mathcal{R}$ are $IV+8I_1$, $\rk(MW(\mathcal{R}))=6$ and $(F_{R})_{b_2}$ is a fiber of type $IV$.

If $\iota$ fixes only rational curves, then the invariant of $NS(X)$ are $(r,a,\delta)=(r,22-r, \delta)$ with $12\leq r\leq 20$. The number and type of reducible fibers of
$\mathcal{R}:R\ra\mathbb{P}^1$ are not uniquely determined by $(r,a,\delta)$, but one admissible choice for each $(r,a,\delta)$ is: 
if $(r,a,\delta)=(r,22-r,1)$ then generically the singular fibers of $\mathcal{R}$ are $I_{r-10}+(20-r)I_1$, $\rk(MW(\mathcal{R}))=19-r$, $(F_{R})_{b_1}$ is a fiber of type $I_1$ and $(F_{R})_{b_2}$ of type $I_{r-10}$;
if $(r,a,\delta)=(18,4,0)$, then generically the singular fibers of $\mathcal{R}$ are $2IV+4I_1$; $\rk(MW(\mathcal{R}))=4$ and both $(F_R)_{b_1}$ and $(F_{R})_{b_2}$ are fibers of type $IV$. 
\end{proposition}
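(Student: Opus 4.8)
The plan is to exploit the theory of $2$-elementary lattices and Nikulin's classification of non-symplectic involutions on K3 surfaces, combined with the concrete geometric dictionary established in the table of Section 2.4 and in Theorem \ref{theor: pencils induced on R by elliptic fibrations on X}. First I would invoke the hypothesis that $\iota^*$ acts as the identity on $NS(X)$. For a non-symplectic involution this forces $NS(X)$ to coincide with the $\iota^*$-invariant part of $H^2(X,\Z)$, which by Nikulin's theory is a $2$-elementary lattice; hence it is determined up to isometry by the triple $(r,a,\delta)$. The fixed locus of a non-symplectic involution on a K3 surface is well understood: if it is nonempty, it consists of a disjoint union of smooth curves, and the number and genera of these curves are read off from $(r,a,\delta)$ by Nikulin's formulas (the genus of the highest-genus curve is $(22-r-a)/2$ and the total number of curves is $(r-a)/2+1$, with the $\delta=0$ special cases handled separately). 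This is the lattice-theoretic input I would cite.

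Next I would translate the three cases of the proposition into arithmetic constraints on $(r,a,\delta)$. By the table in Section 2.4, the fixed locus of $\iota$ is exactly the ramification of $X\to\widetilde R$, which comes from the branch fibers: a smooth fiber $I_0$ contributes one genus $1$ curve, while singular branch fibers contribute only rational curves. Thus ``$\iota$ fixes two genus $1$ curves'' forces both $(F_R)_{b_1}$ and $(F_R)_{b_2}$ to be of type $I_0$, putting us under the hypotheses of Corollary \ref{corollary: Fb1Fb2 smooth+iota is on NS}; plugging the highest-genus-curve formula and the curve-count formula into the requirement ``exactly two curves, both of genus $1$, no rational curves'' pins down $(r,a,\delta)=(10,8,0)$, and the corresponding $2$-elementary lattice is the well-known $U\oplus E_8(-2)$. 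The ``one genus $1$ curve'' and ``only rational curves'' cases similarly force the genus formula to give top genus $1$ or $0$ respectively, yielding $a=20-r$ (with $11\le r\le 19$) and $a=22-r$ (with $12\le r\le 20$), the ranges being dictated by requiring at least one fixed curve while respecting $r\le 20$ and the length bound $a\le r$.

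The remaining and most substantial step is to go from the invariants $(r,a,\delta)$ to the explicit configuration of reducible fibers of $R$ and the Mordell--Weil rank. Here I would use Corollary \ref{corollary: iota is id on NS}: since $\iota^*=\mathrm{id}$ on $NS(X)$, the fibration $\mathcal{E}_R$ has no reducible fibers other than possibly the branch fibers, so all the non-trivial fiber contributions to $NS(X)$ come from the single singular branch fiber together with $\mathcal{E}_X$'s frame lattice. I would compute $NS(X)$ from the Shioda--Tate formula applied to $\mathcal{E}_X$: writing $r = 2 + \sum(\text{rank of reducible-fiber lattices}) + \rk(MW)$, and using that the only reducible fibers of $\mathcal{E}_X$ arise by doubling the branch fiber (an $I_n$ on $R$ becomes $I_{2n}$ on $X$, a $IV$ becomes $IV^*$, etc., per the table). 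Matching the rank of the resulting root lattice against $r-10$ and reconciling the length and $\delta$ against $a$ and $\delta$ fixes the branch-fiber type ($I_{r-10}$ in the generic $\delta=1$ case, $IV$ when $(r,a,\delta)=(14,6,0)$, and so on); a Euler-characteristic count $\sum e(\text{fibers})=12$ for the rational elliptic surface $R$ then forces the complementary $I_1$ fibers and gives $\rk(MW)=19-r$ by the Shioda--Tate formula on $R$. The main obstacle I anticipate is the $\delta=0$ exceptional entries (the $(18,2,0)$, $(14,6,0)$, and $(18,4,0)$ rows): here the naive length count is modified, the $2$-elementary lattice is not the generic one, and one must verify by hand that the displayed fiber configuration indeed realizes the prescribed $(r,a,\delta)$ rather than an $I_n$-type configuration. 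I would settle these by direct lattice computation, checking $\delta$ via whether the discriminant form takes values in $\Z$ (forcing $IV$/$IV^*$-type contributions of $E_6$-character rather than $A_n$-character), which is precisely what distinguishes the additive-reduction rows from the multiplicative ones.
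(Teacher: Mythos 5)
Your proposal is correct and follows essentially the same route as the paper's proof: Nikulin's classification of non-symplectic involutions combined with the identification $NS(X)=H^2(X,\Z)^{\iota}$, the dictionary between the fixed locus and the ramification coming from the branch fibers, Corollary \ref{corollary: iota is id on NS} to exclude reducible fibers away from the branch, and Shioda--Tate together with the Euler-characteristic count on $R$. The only real divergence is in how the branch-fiber type is pinned down: the paper counts directly the components of $\pi^{-1}((F_R)_{b_2})$ fixed by $\iota$ (so that $k=4$ forces $I_4$ or $IV$, $k\neq 4$ forces $I_k$), which sidesteps your discriminant-form computation and the need to control $\rk(MW(\mathcal{E}_X))$ in the lattice matching, and for the case where $\iota$ fixes only rational curves it simply invokes the analysis of \cite{CG} rather than redoing the computation.
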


\proof If $X$ is constructed by a base change of a rational
elliptic surface, then its fixed locus cannot contain curves of
genus greater then 1 (cf. also \cite{Z}). By the classification of non-symplectic involutions on K3 surfaces, we deduce that
there are the following possibilities (\cite{Nik non sympl} or \cite{Z}):\\
$i)$ $\iota$ fixes two genus 1 curves and $H^2(X,\Z)^{\iota}\simeq
U\oplus E_8(-2)$;\\
$ii)$ $\iota$ fixes one genus 1 curve and $k$ rational curve for
$1\leq k\leq 9$ and $H^2(X,\Z)^{\iota}$ is a lattice
determined by the fact that it is hyperbolic, its rank is $k+10$
and its discriminant group is $(\Z/2\Z)^{10-k}$; if $k\neq 4,8$ this determines uniquely $H^2(X,\Z)^{\iota}$, if the $k$ is $4$ or $8$ there are exactly two possibilities for $H^2(X,\Z)^{\iota}$, which depend on the value of $\delta$;\\
$iii)$ $\iota$ fixes $k$ rational curves for $2\leq k\leq 10$ and
$H^2(X,\Z)^{\iota}$ is a lattice determined by the fact
that it is hyperbolic, its rank is $10+k$ and its discriminant
group is $(\Z/2\Z)^{12-k}$; if $k\neq 8$ this determines uniquely $H^2(X,\Z)^{\iota}$, if the $k$ is $8$ there are exactly two possibilities for $H^2(X,\Z)^{\iota}$, which depend on the value of $\delta$.

Moreover, by the theory of non-symplectic involutions on K3
surfaces (see \cite{Nik non sympl}), it follows that
$H^2(X, \Z)^{\iota}\subseteq NS(X)$. If we require that $\iota$ is
the identity on $NS(X)$, we deduce that $NS(X)\simeq
H^2(X, \Z)^{\iota}$. This gives the list of lattices isomorphic
to $NS(X)$, according to the choices of the fixed locus of $\iota$
on $X$.

Let us consider the case on which $\iota$ fixes one curve of genus 1. This
implies that a curve of genus 1 is in the ramification locus of
$\pi:X\ra R$, so one fiber among $(F_R)_{b_1}$ and $(F_R)_{b_2}$
is smooth. In this case, by Corollary \ref{corollary: Fb1 and Fb2
smooth}, there exists a unique elliptic fibration on $X$ such
that $\iota$ acts as an involution of the basis and this elliptic
fibration is $\mathcal{E}_X$. Let us assume that $\iota$ fixes one
curve of genus 1 and $k$ rational curves and that $(F_R)_{b_1}$ is
a smooth curve of genus 1, then the $k$-rational curves fixed by
$\iota$ are contained in the fiber $\pi^{-1}((F_R)_{b_2})$ of the
fibration $\mathcal{E}_X$. So we know that $(F_R)_{b_2}$ satisfies the following conditions:\\ 
$(a)$ $(F_R)_{b_2}$ does not contains fibers with multiple components
(otherwise $X$ would not be a K3 surface);\\
$(b)$ $\iota$ preserves all the components of $\pi^{-1}((F_R)_{b_2})$;\\
$(c)$ $\iota$ on $\pi^{-1}((F_R)_{b_2})$ fixes $k$ curves.\\
By $(a)$, $(F_R)_{b_2}$ is of type $I_n$, $II$, $III$, $IV$. By $(b)$ it can not be of type $II$ and $III$. So, by $(c)$, if $k=4$, then $(F_R)_{b_2}$ is either of type $I_4$ or of type $IV$. If $k\neq 4$, then $(F_R)_{b_2}$ is of type $I_k$.

We already observed that if
$\iota^*$ acts as the identity on $NS(X)$, then $\mathcal{E}_R$
has no reducible fibers outside $(F_R)_{b_1}$ and $(F_R)_{b_2}$.
So all the other singular fibers of $\mathcal{E}_R$ are either of
type $I_1$ or of type $II$. Generically we can assume they are of
type $I_1$. The number of these reducible fibers depends on the
fact that the Euler characteristic of $R$ is 12 and the rank of
the Mordell--Weil group depends on the Shioda-Tate formula and on
the fact that $\rho(R)=10$.

The case $\iota$ fixes $k$ rational curves was already analyzed
in \cite{CG}. In this case $X$ admits an elliptic fibration
induced by a base change on a rational elliptic fibration if and
only if $k\geq 2$. If $k\geq 2$, the rational elliptic fibration
is not necessarily uniquely determined by $X$ and $\iota$ (indeed, there can be more than one elliptic fibration of type 2) with respect to $\iota$). In \cite[Section 5]{CG} the properties of all the rational elliptic surfaces such that a certain base change produces
$X$ are listed. Here we chose one of the admissible rational
elliptic surfaces for each K3 surface $X$.
\endproof

\section{Conic bundles on rational surfaces and elliptic fibrations on K3 surfaces}
The conic bundles are the rational fibrations on the rational elliptic surface $R$. They induce elliptic fibrations on the K3 surface $X$ and in this sense they are one of the most significative topic to relate the linear systems on $R$ to the elliptic fibrations on $X$. Since a lot is known on conic bundles on rational elliptic surfaces, here we investigate in details the relations among these fibrations on $R$ and the induced elliptic fibrations on $X$. In particular we prove that the type of singular fibers of the elliptic fibrations on $X$ induced by a conic bundle is determined by certain specific properties of the conic bundle. More precisely, we consider a conic bundle $\varphi_{|B|}:R\ra \mathbb{P}^1$ and its singular fibers. Then we consider the K3 surface $X$ which is a double cover of $R$ and the elliptic fibration $\mathcal{E}_{B}$ induced on $X$ by $\varphi_{|B|}$. Each singular fiber of the map $\varphi_{|B|}:R\ra\mathbb{P}^1$ induces a singular fiber of the fibration $\mathcal{E}_B$. In Theorem \ref{theo: reducible fibers induced by conic  bundle} we describe the relation among the reducible fibers of $\varphi_{|B|}:R\ra\mathbb{P}^1$ and of the fibration $\mathcal{E}_B$. It is also possible that some singular (reducible or not) fibers of the elliptic fibration $\mathcal{E}_B$ are not induced by singular fibers of $\varphi_{|B|}:R\ra\mathbb{P}^1$. These are called extra singular fibers and are described in Theorem \ref{theo: extra singular fibers conic  bundles}. Thus, the Theorems \ref{theo: reducible fibers induced by conic  bundle} and \ref{theo: extra singular fibers conic  bundles} together describe completely the possible configurations of the singular fibers of an elliptic fibration on the K3 surface $X$ induced by a conic bundle on the rational elliptic surface $R$.

\subsection{Conic bundles on RES}
We recall some basic information on conic bundles on rational elliptic surfaces. Every conic bundle defines a fibration $R\ra\mathbb{P}^1$ whose generic fiber is a rational curve and is such that there is always at least one reducible fiber. The singular fibers of a conic bundle are always reducible. The structure of such fibers on a rational elliptic surface is known and described in Figure \ref{cbred}. Here we briefly recall their classification, giving a self-contained proof.
\begin{proposition} Let $R$ be a relatively minimal rational elliptic surface. Let $B$ be a conic bundle on $R$. Then the singular fibers of $\varphi_{|B|}:R\ra \mathbb{P}^1$ are reducible of one of the types described in Figure  \ref{cbred}, where the multiplicity of each component is given by the number above each vertex. The empty dots correspond to $(-1)$-curves, while the full dots correspond to $(-2)$-curves. The number of components of a reducible fiber of a conic bundles is at most 10 and the number of reducible fibers is at most 9.\end{proposition}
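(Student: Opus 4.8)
The plan is to study a single fibre $F$ of $\varphi_{|B|}$ by intersection theory, identify its components, and then bound the global picture by the Picard number of $R$. Write $F=\sum_i m_i C_i$ with $C_i$ the irreducible components. Since $[F]=B$ is a fibre class we have $F^2=0$ and $F\cdot K_R=-F\cdot(-K_R)=-2$, so $p_a(F)=1+\tfrac12(F^2+F\cdot K_R)=0$. As $F$ is connected, an irreducible $F$ would then be a smooth rational curve; hence every \emph{singular} fibre is reducible, and moreover every reducible fibre is a tree of smooth rational curves meeting transversally (a loop or a non-rational component would force $p_a\ge1$), possibly with multiplicities. The two constraints I would exploit throughout are $B\cdot C_i=0$ (each $C_i$ is contracted by $\varphi_{|B|}$) and, since $-K_R=F_R$ is nef, $C_i\cdot(-K_R)=C_i\cdot F_R\ge 0$ with $\sum_i m_i(C_i\cdot F_R)=F\cdot F_R=2$.

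Next I would split the components into horizontal ($C_i\cdot F_R>0$) and vertical ($C_i\cdot F_R=0$) ones. For a horizontal component adjunction gives $C_i^2=C_i\cdot F_R-2$, whereas Zariski's lemma says the intersection form on the components of $F$ is negative semidefinite and negative definite on proper sub-configurations, so $C_i^2<0$ for every proper component; hence $C_i\cdot F_R=1$. Thus each horizontal component is a section of $\mathcal{E}_R$, i.e. a $(-1)$-curve, and these are the empty dots. A vertical component lies in a fibre of $\mathcal{E}_R$; since $\mathcal{E}_R$ is relatively minimal it is a $(-2)$-curve, and these are the full dots. The relation $\sum_{\mathrm{horiz}}m_i=2$ then leaves exactly two possibilities: two distinct $(-1)$-sections each of multiplicity $1$, or a single $(-1)$-section of multiplicity $2$. (Tsen's theorem gives a rational, hence regular, section of $\varphi_{|B|}$ meeting $F$ in one reduced point, which rules out a globally multiple fibre but permits a doubled component.) Finally the multiplicities $m_i$ are recovered as the unique solution, normalised on the horizontal part, of the homogeneous system $F\cdot C_j=0$ for all $j$, exactly as one computes Kodaira fibre multiplicities; these are the numbers written above the vertices.

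To reach the explicit list in Figure \ref{cbred} I would then enumerate the admissible trees: the one or two $(-1)$-sections must be joined through chains of vertical $(-2)$-curves, and such a chain can only be a sub-configuration of the dual graph of a reducible fibre of $\mathcal{E}_R$, i.e. of an affine $\widetilde{A}$--$\widetilde{D}$--$\widetilde{E}$ diagram. Imposing that the resulting divisor be a tree, that $F^2=0$, and that the multiplicities solve $F\cdot C_j=0$ cuts the possibilities down to the finitely many shapes drawn. I expect this enumeration to be the main obstacle: it forces one to run through each Kodaira type of reducible fibre of $\mathcal{E}_R$ and each way a section may be attached to it, so although every individual check is routine the case analysis is lengthy.

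For the global bounds I would use the Shioda--Tate relation for a genus $0$ fibration with a section, $\rho(R)=2+\sum_s(m_s-1)$, where $m_s$ is the number of irreducible components of the fibre over $s$; equivalently one may compute $\chi_{\mathrm{top}}(R)=4+\sum_s(\chi_{\mathrm{top}}(F_s)-2)$ and use that a tree of $N$ rational curves has Euler number $N+1$. Since $R$ is rational, $\rho(R)=10$ and $\chi_{\mathrm{top}}(R)=12$, so in either form $\sum_s(m_s-1)=8$. In particular the sum is positive, so at least one fibre is reducible; a single reducible fibre has at most $9$ components and there are at most $8$ reducible fibres, which a fortiori gives the bounds $10$ and $9$ of the statement.
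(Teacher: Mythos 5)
Your framework is sound and runs parallel to the paper's: the computation $p_a(F)=0$, the identification of the horizontal components as $(-1)$-curves which are sections of $\mathcal{E}_R$ and of the vertical ones as fibral $(-2)$-curves, and the dichotomy ``two reduced $(-1)$-sections or one $(-1)$-section of multiplicity $2$'' coming from $F\cdot(-K_R)=2$ are all correct; your Picard-number (equivalently Euler-number) count at the end is clean and in fact yields the sharper bounds $9$ and $8$, which imply the stated $10$ and $9$. The genuine gap is that the actual content of the proposition --- that the dual graph is \emph{exactly} one of the two shapes of Figure \ref{cbred}, namely a chain with the two reduced $(-1)$-curves at the extremes, or a fork of two reduced $(-2)$-curves onto a chain of multiplicity-$2$ curves terminating in the doubled $(-1)$-curve --- is precisely the step you declare to be ``the main obstacle'' and leave unexecuted. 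Nothing you wrote excludes, for instance, further branch points in the tree or other multiplicity patterns; the assertion that imposing $F^2=0$ and $F\cdot C_j=0$ ``cuts the possibilities down to the finitely many shapes drawn'' is the claim to be proved, not a proof of it. As it stands you have established the nature of the components and the global bounds, but not the classification of configurations, which is the heart of the statement.

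For comparison, the paper closes exactly this gap by a local argument: it introduces generalized $(-1)$- and $(-2)$-curves and shows, via short negative-definiteness computations (e.g.\ $(P_1+P_2)^2\geq 0$ if two $(-1)$-curves in a fibre meet, $(2P+G_1+G_2)^2\geq 0$ if a $(-1)$-curve meets two $(-2)$-configurations), that for every component $D_1$ such that the reduced fibre minus $D_1$ stays connected, the remainder must be either a Dynkin configuration of $(-2)$-curves or a chain of $(-2)$-curves with a single $(-1)$-curve at one extreme; excluding the $E_m$ diagrams then forces the two shapes. If you instead want to run your proposed enumeration ``through each Kodaira type of reducible fibre of $\mathcal{E}_R$ and each way a section may be attached'', beware that the vertical components of a single conic-bundle fibre need not all lie in one fibre of $\mathcal{E}_R$: already in the $D_3$ configuration the two reduced $(-2)$-curves generally belong to two different fibres of $\mathcal{E}_R$ (the paper exploits exactly this freedom in Theorem \ref{theo: reducible fibers induced by conic bundle}), so the reduction to sub-configurations of a single affine diagram is not correct as stated and the case analysis would have to be organized differently.
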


\begin{figure}[h!]
\newcommand{\ndw}
 {\node[circle,draw, fill=white,inner sep=0pt,outer sep=0pt,minimum size=3pt]}
\begin{tikzpicture}[line cap=round,line join=round,>=triangle 45,x=1.0cm,y=1.0cm, scale=0.62]
\foreach \x in {-1,0,3,4}
 { \draw [line width=1pt] (\x+0.2,-6)-- (\x+0.8,-6); }
\draw [line width=1pt,dash pattern=on 5pt off 5pt] (1.2,-6)-- (2.8,-6);
\draw (-1.5,-6) node[anchor=north west] {{\tiny $-1$}};
\draw (-1.3,-5.3) node[anchor=north west] {{\tiny $1$}};
\draw (-0.3,-5.3) node[anchor=north west] {{\tiny $1$}};
\draw (0.7,-5.3) node[anchor=north west] {{\tiny $1$}};
\draw (2.7,-5.3) node[anchor=north west] {{\tiny $1$}};
\draw (3.7,-5.3) node[anchor=north west] {{\tiny $1$}};
\draw (4.7,-5.3) node[anchor=north west] {{\tiny $1$}};
\draw (4.5,-6) node[anchor=north west] {{\tiny $-1$}};
\draw (5,-6.7) node[anchor=north west] {};
\draw (4.4,-6.7) node[anchor=north west] {};
\fill [color=black] (4,-6) circle (3.0pt);
\ndw at (5,-6) {};
\fill [color=black] (3,-6) circle (3.0pt);
\fill [color=black] (1,-6) circle (3.0pt);
\fill [color=black] (0,-6) circle (3.0pt);
\ndw at (-1,-6) {};
\end{tikzpicture}
\hspace{0.7cm}
\begin{tikzpicture}[line cap=round,line join=round,>=triangle 45,x=1.0cm,y=1.0cm, scale=0.62]
\foreach \x in {0,3,4}
 { \draw [line width=1pt] (\x+0.2,-6)-- (\x+0.8,-6); }
\draw [line width=1pt] (-.8,-5.2)-- (-.2,-5.8);
\draw [line width=1pt] (-.8,-6.8)-- (-.2,-6.2);
\draw [line width=1pt,dash pattern=on 5pt off 5pt] (1.2,-6)-- (2.8,-6);
\draw (4.7,-5.3) node[anchor=north west] {{\tiny $2$}};
\draw (4.6,-6) node[anchor=north west] {{\tiny $-1$}};
\draw (3.7,-5.3) node[anchor=north west] {{\tiny $2$}};
\draw (2.7,-5.3) node[anchor=north west] {{\tiny $2$}};
\draw (0.7,-5.3) node[anchor=north west] {{\tiny $2$}};
\draw (-0.2,-5.3) node[anchor=north west] {{\tiny $2$}};
\draw (-1.3,-4.3) node[anchor=north west] {{\tiny $1$}};
\draw (-1.3,-6.3) node[anchor=north west] {{\tiny $1$}};
\fill [color=black] (4,-6) circle (3.0pt);
\ndw at (5,-6) {};
\fill [color=black] (3,-6) circle (3.0pt);
\fill [color=black] (1,-6) circle (3.0pt);
\fill [color=black] (0,-6) circle (3.0pt);
\fill [color=black] (-1,-5) circle (3.0pt);
\fill [color=black] (-1,-7) circle (3.0pt);
\end{tikzpicture}
 \caption{Reducible fibers of conic bundles}\label{cbred}
\end{figure}
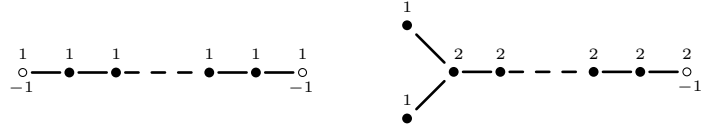
\proof The generic fiber of $\varphi_{|B|}:R\ra \mathbb{P}^1$ is a curve of genus 0, so if it is singular, it is reducible and its components are negative curves. The negative curves on $R$ are either $(-1)$-curves (sections of $\mathcal{E}_R$) or $(-2)$-curves (components of reducible fibers of $\mathcal{E}_R$). 
We recall the definition of  generalized $(-1)$-curve and generalized $(-2)$-curve: a connected reduced (not necessarily irreducible) curve $E$ is a generalized $(-1)$-curve (resp. $(-2)$-curve) if there exists a morphism to a smooth surface contracting $E$ to a point (resp. if the intersection graph on its irreducible components is a Dynkin diagram). It follows that a generalized $(-1)$-curve is either a $(-1)$-curve or a chain of $(-2)$-curves with one $(-1)$-curve on one extreme. 
Let $D$ be a reducible fiber of the conic bundle $\varphi_{|B|}:R\ra\mathbb{P}^1$. Let $D^{red}$ be the reduced sum of its integral component and let $D_1$ be a component of $D^{red}$ such that $D^{red}-D_1$ is still connected. We prove that for every choice of $D_1$ in $D^{red}$, $D^{red}-D_1$ is either a generalized $(-1)$-curve or a generalized $(-2)$-curve. Since we know the admissible structure of these generalized curve, we deduce the admissible configuration of components of the reducible fibers of a conic bundle. 

So, let $D^{red}$ and $D_1$ as above. Then $D^{red}-D_1$ is a connected and reduced divisor of X such that the intersection form on the irreducible components of its support is negative definite. If $D^{red}-D_1$ contains only $(-2)$-curves, then these curves are all contained in the same reducible fiber of the elliptic fibration on $R$, and hence they form a Dynkin diagram. In this case $D^{red}-D_1$ is a generalized $(-2)$-curve. If $D^{red}-D_1$ contains exactly one $(-1)$-curve, $P$, then this has to be on one extreme of the chain of curves in $D^{red}-D_1$. Indeed if there where two (generalized) $(-2)$-curves $G_1$ and $G_2$ which both intersects $P$, then $(2P+G_1+G_2)^2=-4-2-2+4PG_1+4PG_2+2G_1G_2\geq 0$, but the intersection form on the support of $D^{red}-D_1$ is negative definite. Suppose now that there exists at least two $(-1)$-curves $P_1$ and $P_2$ in the support of $D^{red}-D_1$. In this case either they intersect, but this gives a contradiction, because $(P_1+P_2)^2=-1-1+2P_1P_2\geq 0$, or there exists a chain of $(-2)$-curves $G$ such that $G$ intersects both $P_1$ and $P_2$. But also in this case we have a contradiction, since $G^2=-2$ and so $(P_1+P_2+G)^2=-1-1-2+2P_1G+2P_2G\geq 0$.

Hence for every $D_1$ in the support of $D^{red}$ such that $D^{red}-D_1$ is connected, $D^{red}-D_1$ is either a Dynkin diagram or a chain of $(-2)$-curves with one $(-1)$-curve on one extreme. Moreover we know that $D$ is the fiber of a conic bundle, and so $D(-K_R)=2$ and thus either in the support of $D$ there are two $(-1)$-curves or in the support of $D$ there  is a $(-1)$-curve with multiplicity 2. In the first case, one is forced to chose $D_1$ to be a $(-1)$-curve (otherwise $D^{red}-D_1$ contains two $(-1)$-curves) and one obtains that $D^{red}-D_1$ is a generalized $(-1)$-curve, and this gives the first fiber in Figure \ref{cbred}. In the second case choosing a $(-2)$-curve as $D_1$ one obtains that $D^{red}-D_1$ is a generalized $(-1)$-curve. In particular this implies that either $D$ is the second fiber in Figure \ref{cbred} with exactly three components or the $(-1)$-curve in $D$ is on an extreme of $D$. So the $(-1)$-curve can be chosen as $D_1$. Choosing the $(-1)$-curve as $D_1$ one obtains a Dynkin diagram. The diagram of type $E_m$ has to be excluded, indeed if $D^{red}-D_1$ is a diagram of type $E_m$, $m=6,7,8$, then it is always possible choose a $(-2)$ curve $D_2$ in $D^{red}$ such that $D^{red}-D_2$ is connected and contains a $(-1)$-curve (and thus is not a generalized $(-2)$-curve), but it is not a generalized $(-1)$-curve, which is impossible.

The bound of the number of the reducible components and of the number of their components follows by the fact that the Picard number of $R$ is 10.
\endproof

\begin{definition} We say that a reducible fiber of a conic bundle is of type $A_n$ if it has $n$ components with the intersection properties which appear on the left of the Figure \ref{cbred} and we say that a reducible fiber of a conic bundle is of type $D_m$ if it has $m$ components with the intersection properties which appear on the right of the Figure \ref{cbred}. 
\end{definition}

\subsection{Elliptic fibrations induced on double cover of RES}
Given a conic bundle on $R$, it induces an elliptic fibration on $X$. The aim of this section is to determine the reducible fibers of the elliptic fibration on $X$ by the knowledge of the reducible fibers on $R$.  
\begin{theorem}\label{theo: reducible fibers induced by conic bundle}
Let  $|B|$ be a conic bundle on a rational elliptic surface $R$. It admits at least one reducible fiber, which is either of type $A_n$ or of type $D_m$. Assume that there is a $(-2)$ curve $S$ on the surface $R$ which is a section of $\varphi_{|B|}:R\ra\mathbb{P}^1$. Then $n\leq 9$ and $m\leq 9$.

Let $X$ be a K3 surface obtained by a base change on $R\ra\mathbb{P}^1$.
Then $\varphi_{|B|}:R\ra\mathbb{P}^1$ induces an elliptic fibration (with section!) on $X$. 

The correspondence between the reducible fibers of $|B|$ and the ones induced on the elliptic fibration on $X$ is presented in Table \ref{table: red fibers conic bundle and elliptic}.

\begin{table}
\begin{tabular}{c|c||c}
&Fiber conic bundle&Fiber elliptic fibration\\
\hline $ $\\
\hline
$A_n$& no $(-2)$-curves in the branch locus&
       $I_{2n-2}$\\
\hline
$A_n$& \begin{tabular}{c} the $(-2)$-curves are in the branch locus\\ components of a fiber of type $I_k$\end{tabular}&
       $I_{2n-6}^*$\\
\hline
$A_3$& \begin{tabular}{c} the $(-2)$-curve is in the branch locus\\ components of a fiber of type $III$\end{tabular}       
       &$I_{1}^*$\\
\hline
$A_3$&\begin{tabular}{c} the $(-2)$-curve is in the branch locus\\ components of a fiber of type $IV$\end{tabular}&
          $I_{2}^*$\\
\hline
$A_4$& \begin{tabular}{c} the $(-2)$-curves are in the branch locus\\ components of a fiber of type $IV$\end{tabular}&
       $III^*$\\
\hline
  $D_m$&
\begin{tabular}{c} no $(-2)$-curves in the branch curves\end{tabular}&
$I_{2m-6}^*$\\
\hline      
$D_4$&
\begin{tabular}{c} the $(-2)$-curves are in the branch locus \\
components of a fiber of type $I_k$\end{tabular}& $III^*$\\

\hline      
$D_3$&
\begin{tabular}{c} one $(-2)$-curve not in the branch locus \\
one $(-2)$ curve in the branch locus\\
component of a fiber of type $I_k$\end{tabular}& $I_1^*$\\

\hline      
$D_3$&
\begin{tabular}{c} one $(-2)$-curve not in the branch locus \\
one $(-2)$ curve in the branch locus\\
component of a fiber of type $III$\end{tabular}& $I_2^*$\\

\hline      
$D_3$&
\begin{tabular}{c} one $(-2)$-curve not in the branch locus \\
one $(-2)$ curve in the branch locus\\
component of a fiber of type $IV$\end{tabular}& $I_3^*$\\
\hline      
$D_3$&
\begin{tabular}{c} both the $(-2)$-curves in the branch locus \\
one component of a fiber of type $I_k$\\
one component of a fiber of type $I_h$\\
\end{tabular}& $I_2^*$\\

\hline      
$D_3$&
\begin{tabular}{c} both the $(-2)$-curves in the branch locus \\
one component of a fiber of type $I_k$\\
one component of a fiber of type $III$\\
\end{tabular}& $I_3^*$\\

\hline      
$D_3$&
\begin{tabular}{c} both the $(-2)$-curves in the branch locus \\
one component of a fiber of type $I_k$\\
one component of a fiber of type $IV$\\
\end{tabular}& $I_4^*$\\

\hline      
$D_3$&
\begin{tabular}{c} both the $(-2)$-curves in the branch locus \\
both components of a fiber of type $III$\\
\end{tabular}& $I_4^*$\\

\hline      
$D_3$&
\begin{tabular}{c} both the $(-2)$-curves in the branch locus \\
one component of a fiber of type $III$\\
one component of a fiber of type $IV$\\
\end{tabular}& $I_5^*$\\

\hline      
$D_3$&
\begin{tabular}{c} both the $(-2)$-curves in the branch locus \\
both components of a fiber of type $IV$\\
\end{tabular}& $I_6^*$
\end{tabular}\caption{Reducible fibers of conic bundles and corresponding reducible fibers of the elliptic fibration on $X$}\label{table: red fibers conic bundle and elliptic}\end{table}
\end{theorem}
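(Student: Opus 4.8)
The plan is to establish, in order, the existence of a section on the induced fibration, the bounds $n\le 9$ and $m\le 9$, and finally the fibre correspondence of Table~\ref{table: red fibers conic bundle and elliptic}; the last assertion is where essentially all the work lies.

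First I would dispatch the section and the bounds, both of which come quickly. That $\varphi_{|B|}$ induces a genus $1$ fibration $\mathcal{E}_B$ on $X$ is already Proposition~\ref{prop: (generalized) conic bundles and splitting genus 1 pencil induces genus 1 fibration}, so only a section is new. Since $S$ is a $(-2)$-curve it satisfies $S\cdot(-K_R)=0$, hence $S$ is contained in a fibre of $\mathcal{E}_R$. If that fibre is not a branch fibre, then $S\cap B_R=\emptyset$, so $S$ lifts isomorphically to $\widetilde{R}$ and $\pi^{-1}(S)$ is an \'etale double cover of $\PP^1$, i.e.\ two disjoint $(-2)$-curves, each meeting the generic fibre of $\mathcal{E}_B$ once; if $S$ lies in a branch fibre, the single ramification curve over the strict transform of $S$ plays the same role. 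Either way $\mathcal{E}_B$ acquires a section. For the bounds, the section lets me invoke the Shioda--Tate-type identity for the conic bundle, $\rho(R)=2+\sum_v(r_v-1)$, where $r_v$ is the number of components of the fibre over $v$; as $\rho(R)=10$ we get $\sum_v(r_v-1)=8$, which in particular is $>0$ (so at least one reducible fibre exists) and forces $n-1\le 8$ and $m-1\le 8$, that is $n\le 9$ and $m\le 9$.

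The heart is the computation of the fibre $\Phi_X$ of $\mathcal{E}_B$ induced by each reducible fibre $D$ of $\varphi_{|B|}$, which I would carry out on $\widetilde{R}$, where $\pi\colon X\to\widetilde{R}$ is branched along the smooth curve $\widetilde{B}$. The recipe is: (i) compute the total transform $\Phi_{\widetilde{R}}=\beta_2^{*}D$, recording every exceptional curve over a blown-up point of $\widetilde{R}$ lying on $D$, together with its multiplicity; (ii) determine $\widetilde{B}\cap\Phi_{\widetilde{R}}$, i.e.\ which components of $\Phi_{\widetilde{R}}$ lie in $\widetilde{B}$ (the strict transforms of those $(-2)$-curves of $D$ that are components of a branch fibre) and at which further points $\Phi_{\widetilde{R}}$ meets $\widetilde{B}$ transversally; (iii) build $\Phi_X=\pi^{*}\Phi_{\widetilde{R}}$ component by component, using that a branch component has a single preimage (a ramification curve, with doubled multiplicity in the fibre), a non-branch component meeting $\widetilde{B}$ in two points has a single connected preimage, a non-branch component disjoint from $\widetilde{B}$ splits into two copies, while a node of $\Phi_{\widetilde{R}}$ on $\widetilde{B}$ has one preimage and a node off $\widetilde{B}$ has two; (iv) read off the Kodaira type of $\Phi_X$ from the resulting dual graph and multiplicities, cross-checking with the Euler-number identity $e(\Phi_X)=2\,e(\Phi_{\widetilde{R}})-e(\widetilde{B}\cap\Phi_{\widetilde{R}})$. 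The easiest entry already shows the mechanism: for an $A_n$-fibre with no $(-2)$-curve in the branch locus, the two extremal $(-1)$-curves are sections of $\mathcal{E}_R$ meeting $B_R$ in two points each, hence have connected preimages, while the interior $(-2)$-curves are disjoint from $B_R$ and split, so $\Phi_X$ is a cycle of $1+2(n-2)+1=2n-2$ rational curves, i.e.\ $I_{2n-2}$.

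The main obstacle is not any single computation but the exhaustive, delicate bookkeeping for the starred rows. The decisive subtlety is that when the interior $(-2)$-curves of $D$ lie in a branch fibre one must blow up not only the nodes of $D$ but also the nodes of that branch fibre sitting on the extremal components of the chain; these extra exceptional curves enter $\Phi_{\widetilde{R}}$ with multiplicity one and, after pulling back, become precisely the two pairs of multiplicity-one ``legs'' that convert the chain into a $\widetilde{D}$-configuration, producing an $I^{*}$ fibre (this is exactly how $I^{*}_{2n-6}$ and $I^{*}_{2m-6}$ arise). One must also treat the unstable branch fibres through their base-change behaviour ($III\to I_0^{*}$, $IV\to IV^{*}$), recalling that a type $IV$ fibre forces four exceptional curves in $\beta_2$. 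The most laborious cases are the $D_3$ fibres, where the two $(-2)$-curves can lie in the branch locus in many combinations (two components of one $I_k$, or of fibres of types $I_k,\,III,\,IV$ in various pairings); for each I would assemble the local dual graph as above and match it to the affine Dynkin diagrams, with the Euler-number identity serving throughout as the consistency check that pins down the index of the resulting $I^{*}_j$ fibre.
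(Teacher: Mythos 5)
Your proposal is correct and follows essentially the same route as the paper: the section and the bounds come from the same observations (a $(-2)$-curve section is a fibre component of $\mathcal{E}_R$, hence either splits into two sections or is a ramification section, and the Picard number $10$ forces at most $9$ components per reducible fibre), and the table is obtained by the same component-by-component analysis of preimages under $\pi$ after blowing up the nodes of the branch fibres, including the key point that the extra exceptional curves over the branch-fibre nodes on the extremal components supply the multiplicity-one legs of the $I^*$ configurations. The only cosmetic difference is that you organize the case check per reducible fibre with an Euler-number cross-check, whereas the paper tabulates the local configurations over each type of $(-2)$-curve (its Table \ref{table: rational curves on R and X}) and then assembles the fibres from those.
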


\proof The bound on $n$ and $m$ follows by the fact that the Picard number of $R$ is 10 and the class of the section $S$ is surely independent from the classes of the irreducible components of reducible fibers of the rational fibration $\varphi_{|B|}:R\ra \mathbb{P}^1$.

We already saw the structure of the reducible fiber of a conic bundle on a rational elliptic surface and we proved in Proposition \ref{prop: (generalized) conic bundles and splitting genus 1 pencil induces genus 1 fibration} that each conic bundle $|B|$ on $R$ induces a genus 1 fibration $\mathcal{E}_B:X\ra\mathbb{P}^1$. Let us assume that $S$ is a section of a conic bundle on $R$ and moreover that it is a $(-2)$ curve on $R$. Since $S$ is a section of $\varphi_{|B|}:R\ra\mathbb{P}^1$, it meets each fiber in one point. The fibers of $\mathcal{E}_B$ are the pull back of the fibers of the conic bundle. If the curve $S$ is in the branch locus of the $2:1$ cover $X\ra R$, then the intersection point between $S$ and a fiber of $\varphi_{|B|}:R\ra\mathbb{P}^1$ corresponds to one point of intersection between the inverse image of $S$ and a fiber of $\mathcal{E}_B$. So the inverse image of $S$ is a section of $\mathcal{E}_B$. Viceversa, if $S$ is not in the branch locus, then it has a trivial intersection with the branch locus, since the branch locus consists of two fibers of the elliptic fibration $R\ra\mathbb{P}^1$ and $S$ is necessarily a component of another reducible fiber. In this case the inverse image of the point of intersection between $S$ and a fiber of the conic bundle $|B|$ consists of two distinct points. The curve $S$ splits into two disjoint rational curves in the double cover $X$ (because it does not intersect the branch locus and it is rational) and each curve intersects the fibers of $\mathcal{E}_B$ in exactly one point. Thus the inverse image of $S$ consists of two disjoint sections of $\mathcal{E}_B$ the genus 1 fibration on $X$. 
In both the cases $\mathcal{E}_B:X\ra\mathbb{P}^1$ is in fact an elliptic fibration.
Now we consider the types of reducible fibers induced on the elliptic fibration $\mathcal{E}_B:X\ra\mathbb{P}^1$ by the reducible fibers of the conic bundle $|B|$.

We describe now the inverse image on $X$ of the negative curves on $R$. In particular we are interested in the components of the reducible fibers of $|B|$.

Each $(-1)$-curve is covered by a unique rational curve on $X$.

First we assume that all the $(-2)$-curves in the reducible fibers of the conic bundle are not in the branch locus of the double cover $X\ra R$, so that each $(-2)$-curve is covered by two disjoint rational curves on $X$. If the reducible fiber of the conic bundle is of type $A_n$, then it induces a fiber on the elliptic fibration on $X$ which is of type $I_{2n-2}$. If the reducible fiber of the conic bundle is of type $D_m$, then it induces a fiber of type $I_{2m-6}^*$ on $\mathcal{E}_B$.  

Now we assume that some of the $(-2)$-curves in the reducible fibers of the conic bundles are branch curves of the double cover $X\ra R$. This implies that the $(-2)$-curves appearing in the reducible fibers of the conic bundle are contained in the branch fibers of the fibration $\mathcal{E}_R$, i.e. they are contained in $(F_R)_{b_1}\cup (F_R)_{b_2}$.

The components of the reducible fibers of a conic bundle which are of type $A_n$ are two sections and $n-2$ $(-2)$-curves which are all contained in the same fiber which is necessarily of type $I_n$, $III$, $IV$. So either all the $(-2)$-components are in the branch locus of the cover $X\ra R$ or none of the $(-2)$-components are in the branch locus. The same is true for the reducible fibers of a conic bundle which are of type $D_m$, with $m>3$. For the conic bundles of type $D_3$ there are more possibilities, indeed the two $(-2)$-curves appearing are contained in different fibers of the rational elliptic fibration, so it is possible that both of them are in the branch locus, or only one of them is. 

Let us suppose that there are two $(-2)$-curves in the branch locus which are contained in a fiber of type $I_k$, $k>2$. In this case, in order to construct $X$ we first blow up $R$ in their intersection point introducing a $(-1)$-curve, which is not in the branch locus and then consider the double cover. This gives three $(-2)$-curves on $X$: two of them are mapped $1:1$ to the $(-2)$-curves we are considering in $R$ and are disjoint in $X$, the third is a $2:1$ cover of the exceptional divisor of the blow up of $R$ and meets both of the other two curves, each in one point.

We conclude that if the $(-2)$-curves of a reducible fiber of type $A_n$ of the conic bundle are contained in a fiber of type $I_k$ of the rational elliptic fibration, then the reducible fiber of the conic bundle induces a fiber of type $I_{2n-6}^*$ on $\mathcal{E}_B$.

If the $(-2)$-curves of a reducible fiber of type $D_m$, with $m\geq 4$, of the conic bundle are contained in a fiber of type $I_k$ of the rational elliptic fibration, then the reducible fiber of type $D_m$ of the conic bundle is of type $D_4$, i.e. $m=4$. Such a fiber induces a fiber of type $III^*$ on $\mathcal{E}_B$.

If there are $(-2)$-curves components of a reducible fiber of the conic bundle which are contained in the branch locus of $X\ra R$ but they are not contained in a fiber of type $I_k$ of $\mathcal{E}_R$, the they are components of fibers of type either $III$ or $IV$. In this case their inverse image in $X$ is slightly different, but it can be computed in a similar way and is given in Table \ref{table: rational curves on R and X}.  

Summarizing the discussion above, we have that a $(-2)$-curve which is a component of a reducible fiber of a conic bundle on $R$ corresponds to a certain configuration of $(-2)$-curves on $X$, which is presented in the following Table \ref{table: rational curves on R and X}. In the table the symbol $\bullet$ always corresponds to a $(-2)$-curve. The letter $e$ near a point means that the point corresponds to a $(-2)$-curves on $X$ which is the cover of an exceptional curves in the blow up $\widetilde{R}\ra R$. Considering all the possible configurations of $(-2)$-curves which are components of reducible fibers of a conic bundle on $R$, we obtain the statement.

\begin{table}
\begin{tabular}{|c|c|c|}
\hline
On $R$& Location &On $X$\\
\hline
$\begin{array}{c}\bullet\\
-2\end{array}$&not in the branch&$\begin{array}{ccc}\bullet&&\bullet\\
-2&&-2\end{array}$\\
\hline
$\begin{array}{c}\bullet\\
-2\end{array}$&\begin{tabular}{c}in the branch\\ component of fiber $I_k$\end{tabular}&$$
\xymatrix{
         \bullet_e\ar@{-}[r] &\bullet \ar@{-}[r]&\bullet_{e'}}$$\\
\hline
$\begin{array}{c}\bullet\\
-2\end{array}$&\begin{tabular}{c}in the branch\\ component of fiber $III$\end{tabular}&$$
\xymatrix{
         \bullet_{e'}\ar@{-}[dr]\\
          &\bullet_{e} \ar@{-}[r]&\bullet\\
          \bullet_{e'}\ar@{-}[ur]\\
          }$$\\
\hline
$\begin{array}{c}\bullet\\
-2\end{array}$&\begin{tabular}{c}in the branch\\ component of fiber $IV$\end{tabular}&$$
\xymatrix{
         \bullet_{e'}\ar@{-}[dr]\\
          &\bullet_{e} \ar@{-}[r]&\bullet_{e'''}\ar@{-}[r]&\bullet\\
          \bullet_{e''}\ar@{-}[ur]\\
          }$$\\
\hline
\end{tabular}\caption{Rational curves on $R$ and on $X$}\label{table: rational curves on R and X}
\end{table}\endproof

\begin{definition}Let us consider an elliptic fibration $\mathcal{E}_B:X\ra\mathbb{P}^1$ induced by a conic bundle $|B|$ on $R$. We say that a singular fiber of $\mathcal{E}_B$ is an extra singular fiber (with respect to $|B|$) if it is not induced by a reducible fiber of the conic bundle $|B|$.
\end{definition}
\begin{theorem}\label{theo: extra singular fibers conic bundles}
Let $|B|$ be a conic bundle on $R$ and $\mathcal{E}_B$ the elliptic fibration induced by $|B|$ on $X$. Then the extra singular fibers of $\mathcal{E}_B$ are induced by the curves $B'\in |B|$ such that the morphism $B'\rightarrow \mathbb{P}^1$ given by composing the inclusion map $B'\subset R$ with the elliptic fibration map on $R$ share a ramification with the base change map $f:\mathbb{P}^1\rightarrow \mathbb{P}^1$ ($f$ is as in \eqref{diagram fiber  product}). Moreover, all irreducible singular fibers of $\mathcal{E}_B$ are also of the above form.

The type of the extra singular fibers of $\mathcal{E}_B$ are listed in Table \ref{table: extra singular fibers}.
\begin{table}
\begin{tabular}{c|c||c|c||c}
$(F_R)_{b_1}$&$(F_R)_{b_2}$&$B\cap (F_R)_{b_1}$&$B\cap (F_R)_{b_2}$&Fiber on $\mathcal{E}_B$\\
\hline $ $\\
\hline
Any&Any& 2 distinct points& tangent&$I_1$\\
\hline
Any&$I_n$& 2 distinct points& singular point&$I_2$\\
\hline
Any&$II$& 2 distinct points& singular point&$I_3$\\
\hline
Any&$III$& 2 distinct points& singular point&$I_4$\\
\hline
Any&Any& tangent & tangent&$I_2$\\
\hline
Any&$I_n$& tangent& singular point&$I_3$\\
\hline
Any&$II$&  tangent& singular point&$I_4$\\
\hline
Any&$III$&  tangent& singular point&$I_5$\\
\hline
$I_n$&$I_n$& singular point& singular point&$I_4$\\
\hline
$I_n$&$II$& singular point& singular point&$I_5$\\
\hline
$I_n$&$III$& singular point& singular point&$I_6$\\
\hline
$II$&$II$& singular point& singular point&$I_6$\\
\hline
$II$&$III$& singular point& singular point&$I_7$\\
\hline
$III$&$III$& singular point& singular point&$I_8$\\
 
\end{tabular}\caption{Extra singular fibers of $\mathcal{E}_B$}\label{table: extra singular fibers}\end{table}
\end{theorem}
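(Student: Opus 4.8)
The plan is to realise the fibres of $\mathcal{E}_B$ as double covers of the members $B'\in|B|$ and to read off their degenerations from the branch data. For a generic $s$ the fibre $B_s=\varphi_{|B|}^{-1}(s)$ is a smooth rational curve which, being a bisection of $\mathcal{E}_R$, meets each branch fibre in two points; so $B_s\cap((F_R)_{b_1}\cup(F_R)_{b_2})$ consists of $4$ points and the fibre of $\mathcal{E}_B$ over $s$ is the double cover of $B_s\cong\mathbb{P}^1$ branched at these $4$ points, i.e.\ a smooth genus $1$ curve. More intrinsically, this fibre is the (resolved) fibre product $B'\times_{\mathbb{P}^1}\mathbb{P}^1$ of the two degree $2$ maps $\mathcal{E}_R|_{B'}\colon B'\to\mathbb{P}^1$ and $f\colon\mathbb{P}^1\to\mathbb{P}^1$. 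First I would record the elementary fact that the fibre product of two double covers of $\mathbb{P}^1$ is smooth precisely when their branch loci are disjoint, and acquires one singular point over each common branch value. This gives at once the characterisation in the first part of the statement: the fibre over $s$ degenerates exactly when $\mathcal{E}_R|_{B'}$ is ramified over $b_1$ or $b_2$, that is when $B'$ is tangent to, or passes through a singular point of, one of the branch fibres — precisely the condition that $\mathcal{E}_R|_{B'}$ shares a ramification with $f$.

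Next I would prove that, apart from the reducible fibres coming from reducible members of $|B|$ classified in Theorem \ref{theo: reducible fibers induced by conic bundle}, every singular fibre of $\mathcal{E}_B$ is of this extra type, and that every \emph{irreducible} singular fibre is. The two branch points cut out by $(F_R)_{b_1}$ lie over $b_1$ and the two cut out by $(F_R)_{b_2}$ lie over $b_2\neq b_1$, so at most two of the four branch points can collide at a single point of $B'$; hence the branch divisor can acquire only ordinary double points and the fibre can only degenerate to a curve of multiplicative type $I_k$, never to a cuspidal ($II$) fibre. Since the reducible members of $|B|$ produce exactly the reducible fibres of Theorem \ref{theo: reducible fibers induced by conic bundle}, no irreducible singular fibre arises from them, and the claim follows.

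It then remains to determine $k$ in each case, which I would do by a local computation of the cover. Writing $X\to\widetilde R$ locally as $w^2=g$, with $g$ a local equation of the branch locus, the fibre of $\mathcal{E}_B$ over the distinguished $s_0$ is obtained by restricting $w^2=g$ to $B_{s_0}$ and resolving. A simple tangency of $B_{s_0}$ with a smooth point of a branch fibre gives locally $w^2=u^2\cdot(\text{unit})$ on a smooth surface, i.e.\ a node of the fibre, contributing a single nodal component and hence $I_1$ when it is the only degeneration. When $B_{s_0}$ passes through a singular point of a branch fibre, the surface $w^2=g$ has an $A_k$ singularity — $A_1$ for a node of an $I_n$ fibre, $A_2$ for the cusp of a $II$ fibre, $A_3$ for the tacnode of a $III$ fibre — whose minimal resolution inserts $k$ rational $(-2)$-curves; these lie over $s_0$, hence belong to the fibre of $\mathcal{E}_B$, and together with the double cover of $B_{s_0}$ close up into a cycle contributing $k+1$ to its length. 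Finally I would check additivity: the contributions at the (at most two) degenerate points lie over the distinct values $b_1$ and $b_2$ and assemble into a single cycle, so the resulting fibre is $I_{c_1+c_2}$ with local contributions $c_i\in\{0,1,2,3,4\}$ according to whether the intersection with $(F_R)_{b_i}$ is transverse, tangent, or through the singular point of an $I_n$, $II$, or $III$ fibre. Reading off all combinations yields Table \ref{table: extra singular fibers}.

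The main obstacle is the bookkeeping in the through-a-singular-point cases: one must correctly identify the surface singularity type of $w^2=g$ from the local form of the singular branch fibre, and then verify precisely how the exceptional $(-2)$-curves of its resolution link up with the strict transform of $\pi^{-1}(B_{s_0})$ to form a cycle of the claimed length. This requires taking the local model on $\widetilde R$, after the blow-ups $\beta_2\colon\widetilde R\to R$ that make the branch locus smooth, rather than on $R$; and it requires noting that when both branch intersections degenerate the double cover $\pi^{-1}(B_{s_0})\to B_{s_0}$ may become disconnected, its two components then meeting both resolution chains and thereby producing the longer cycle $I_{c_1+c_2}$. Once these local pictures are assembled, the additivity of cycle lengths together with the exclusion of non-multiplicative fibres completes the proof.
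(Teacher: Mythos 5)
Your proposal is correct and follows essentially the same route as the paper: characterise the extra singular fibres as those coming from smooth $B'\in|B|$ whose degree-two map to the base shares a ramification point with $f$ (via the fibre-product description), then determine the fibre type case by case by resolving the local singularities where $B'$ meets $(F_R)_{b_1}\cup(F_R)_{b_2}$ non-transversally. Your packaging of the local analysis via the $A_k$ surface singularities of $w^2=g$ (with $A_1$, $A_2$, $A_3$ for the singular points of $I_n$, $II$, $III$) and the additivity rule $I_{c_1+c_2}$ with $c_i\in\{0,1,2,3,4\}$ is just a compact reformulation of the paper's explicit blow-up bookkeeping on $\widetilde{R}$, and it reproduces Table \ref{table: extra singular fibers} exactly.
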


\begin{proof}
Let $F'$ be an extra singular fiber of $\mathcal{E}_B$. Then $F'$ is singular but is induced by a smooth curve $B'\in |B|$. Since $F'$ is isomorphic to the blow up of the fiber product of two smooth curves $B' \times_{\mathbb{P}^1}\mathbb{P}^1$, it is singular if and only if $B'\rightarrow \mathbb{P}^1$ and $f:\mathbb{P}^1\rightarrow \mathbb{P}^1$ share a ramification point. Moreover, since the singular fibers of the conic bundle $|B|$ are reducible, they yield reducible singular fibers on the induced elliptic fibration. Therefore the irreducible singular fibers are of the former type.

Now we prove that the type of the singular fibers is the one given in Table \ref{table:  extra singular fibers}.

Since $B'\ra\mathbb{P}^1$ shares at least one branch point with the base change $f:\mathbb{P}^1\ra\mathbb{P}^1$, associated to $X\ra R$, $B'$ meets at least one of the branch fibers $(F_R)_{b_1}$, $(F_R)_{b_2}$ in a point with multiplicity 2. Since $B'$ is a fiber of a conic bundle on $R$, it is a bisection of the fibration $\mathcal{E}_R:R\ra\mathbb{P}^1$, hence the unique possibilities (up to interchanging $b_1$ with $b_2$) are the followings:
\begin{enumerate}\item $B'$ meets $(F_R)_{b_1}$ in two disjoint points and meets $(F_R)_{b_2}$ in a smooth point with multiplicity 2 (i.e. $B_t$ is tangent to $(F_R)_{b_2}$);
\item $B'$ meets $(F_R)_{b_1}$ in two disjoint points and meets $(F_R)_{b_2}$ in a singular double point;
\item  $B'$ meets $(F_R)_{b_1}$ in a smooth point with multiplicity 2 and meets also  $(F_R)_{b_2}$ in a smooth point with multiplicity 2 (i.e. $B'$ is tangent both to $(F_R)_{b_1}$ and to $(F_R)_{b_2}$);
\item $B'$ meets $(F_R)_{b_1}$ in a smooth point with multiplicity 2 and meets $(F_R)_{b_2}$ in a singular double point;
\item $B'$ meets $(F_R)_{b_1}$ in a singular double point and meets $(F_R)_{b_2}$ in a singular double point.
\end{enumerate}

In the first two cases $B'$ does not split in the double cover $X\ra R$, in the others it splits.
Moreover, we observe that if $B'$ passes through a singular point of $(F_R)_{b_i}$, then this point is a double point, and thus $(F_R)_{b_i}$ is not of type $IV$.

Now we give the details of some of the lines of the Table \ref{table: extra singular fibers}, the others are similar. We denote by $F'$ the fiber of $\mathcal{E}_B$ induced by $B'$

If $B'$ meets $(F_R)_{b_1}$ in two disjoint points and is tangent to $(F_R)_{b_2}$, $\beta_2^*(B_t)$ is isomorphic to $B'$ and its double cover $F'\ra\beta_2^*(B')$ is a connected curve, which is singular in exactly the point $\pi^{-1}(\beta_2^{-1}(B'\cap (F_R)_{b_2})$. Hence $F'$ is a fiber of type $I_1$.

If $B'$ meets $(F_R)_{b_1}$ in two disjoint points and meets $(F_R)_{b_2}$ in a singular double point, $\beta_2^*(B')$ consists of the strict transform of $B'$ and some exceptional divisors. The number and the intersection properties of the exceptional divisors depends on the singularities of $(F_R)_{b_2}$ in $b:=(F_R)_{b_2}\cap B'$. We know that $(F_R)_{b_2}$ is one of the following fibers: $I_n$, $II$, $III$.  If $(F_R)_{b_2}$ is a fiber of type $I_n$
then $\beta_2$ introduces one exceptional divisor on $b$, say $E_b:=\beta_2^{-1}(b)$. The strict transform of $B'$, $\widetilde{B'}$, meets $E_b$ in one point. Hence $\beta_2^*(B')$ consists of two smooth curves meeting in one point.  We recall that $E_b$ is not contained in the branch locus of $\pi:X\ra\widetilde{R}$ and its double cover consists of a connected irreducible rational curve. So $\pi^{-1}(E_b\cap \widetilde{B'})$ consists of two points. Hence $F'$ is given by two rational curves meeting in two points. So $F'$ is a fiber of type $I_2$.
If $(F_R)_{b_2}$ is a fiber of type $II$, then $\beta_2$ introduces one exceptional divisor on $b$, say $E_b:=\beta_2^{-1}(b)$. The strict transform of $B'$, $\widetilde{B'}$, meets $E_b$ in one point. Hence $\beta_2^*(B')$ consists of two smooth curves meeting in one point. The curve $E_b$ is not contained in the branch locus of $\pi:X\ra\widetilde{R}$, and it splits in two curves $E'_b$ and $E''_b$. So  $\pi^{-1}(E_b\cap \widetilde{B'})$ consists of two points (one on $E'_b$ and the other on $E''_b$). Hence $F'$ is given by three rational curves (the inverse image of $\widetilde{B'}$, $E'_b$ and $E''_b$) meeting as a triangle (the inverse image of $\widetilde{B'}$ meets both $E'_b$ and $E''_b$). So $F'$ is of type $I_3$.
If $(F_R)_{b_2}$ is a fiber of type $III$, then $\beta_2$ introduces two exceptional divisors. One of them splits in the double cover, the other no, hence $F'$ is a fiber of type $I_4$.

If $B'$ is tangent both to $(F_R)_{b_1}$ and to $(F_R)_{b_2}$, $\beta_2^{-1}(B')\simeq \widetilde{B'}\simeq B'$. On the other hand $\widetilde{B'}$ meets the branch locus in two points, each with even multiplicity. So $\widetilde{B'}$ splits in the double cover and $\pi^{-1}(\widetilde{B'})$ consists of two rational curves $B''$ and $B'''$. These two curves meets in two points so the fiber $F'$ is a fiber of type $I_2$.
\end{proof}

\begin{rem}{\rm The reducible fibers of the elliptic fibration on $X$ induced by a conic bundle on $R$ are of the following types:\\ {\bf induced by reducible fibers of the conic bundles:} $I_{2n}$ with $n\leq 9$, $I_{2m}^*$ for $0\leq m\leq 6$, $I^*_{2l+1}$ with $l\leq 2$ and $III^*$;\\
{\bf extra singular fibers:} $I_n$ with $n=1,\ldots,8 $.

In particular no fibers of type $II^*$, $IV^*$, $II$, $III$, $IV$ appear. The fibers of type $I_n$ do not appear for odd $n>7$ and the fibers of type $I^*_m$ do not appear for odd $m>5$ and even $m>12$.}\end{rem}

\begin{rem}{\rm
If the base change map which induces $X$ is ramified only at smooth fibers, then any elliptic fibration on $X$ induced by a conic bundle on $R$ has at most 6 irreducible singular fibers. Indeed, the irreducible singular fibers are extra singular fibers. By the theorem above, these correspond to conics such that the map to the base shares a ramification point with the base change map, and are therefore tangent to the fibers above the ramification. Since these are smooth elliptic curves, these tangent conics correspond to non-trivial 2-torsion points on the elliptic curve, which are at most 3 points. Since the base change map is ramified at two fibers, there are at most 6 extra singular fibers, and therefore at most 6 irreducible singular fibers.}
\end{rem}

\subsection{The elliptic fibrations on a very particular K3 surface}\label{subsec: example K3 double cover R211}

Let $R_{211}$ be a rational elliptic surface with one reducible fiber of type $II^*$ and two irreducible singular fibers of type $I_1$. We suppose that the $II^*$ fiber occurs at $s=\infty$. Then $R_{211}$ belongs to the following family:
\[
y^2=x^3+ ax+s+b,
\]  
where $a$ and $b$ are the two parameters of the family and the surface $R_{211}$ corresponds to an elliptic curve over the function field $\mathbb{C}(s)$. Moreover, $R_{211}$ is isomorphic to the blow up of $\mathbb{P}^2$ at the point $P=(0:1:0)$ and other 8 infinitely near points.

Let $X_{211}$ be a K3 surface obtained by a degree two base change of $R_{211}$ ramified at two smooth fibers. Then $X_{211}$ is naturally endowed with an elliptic fibration whose singular fibers are of types $2II^*+ 4I_1$. This K3 surface is very well known, its N\'eron--Severi group is isometric to $U\oplus E_8\oplus E_8$, its automorphism group is finite, \cite[Theorem]{K}, the number of $(-2)$-curves is finite and their intersection properties are known. Thanks to this, also the classification of all the elliptic fibrations on $X_{211}$ is known (and it is based on the possible reducible fibers one can construct with the finite $(-2)$-curves on the surface). 
To be more precise, the K3 surface $X_{211}$ admits exactly two elliptic fibrations. One has $2II^*+4I_1$ as singular fibers and is the elliptic fibration induced by the elliptic fibration on $R$. The second has $I_{12}^*+6I_1$ as singular fibers. 

Since $R_{211}$ is the blow up of $\mathbb{P}^2$ in a unique point $P$, $R_{211}$ has only one conic bundle, which is given by the linear system of lines through $P$. This conic bundle has a unique reducible fiber, which corresponds to the line $l$ of the pencil which has the direction identified by the infinitely near points of the pencil of cubics defining $R_{211}$. This reducible fiber is of type $D_9$: indeed in order to construct $R_{211}$ one blows up $P$ 9 times introducing a generalized $(-1)$-curve, called $E$, with nine components. The components of the fiber of type $II^*$ of the elliptic fibration on $R_{211}$ are the eight $(-2)$-curves in the support of $E$ and the strict transform of the line $l$. The extreme curves of $E$ are a $(-1)$-curve and the $(-2)$-curve which is the strict transform of the first blow up of $P$. This curve is a section of the rational fibration of the conic bundle, indeed it separates the lines through $P$. Hence this component is not contained in the reducible fibers of the conic bundle. So the components of the reducible fiber of the conic bundle are eight curves in the support of $E$ (seven $(-2)$-curves and the $(-1)$-curve) and the strict transform of the line $l$. This corresponds to a reducible fiber of type $D_9$. By the construction of $X_{211}$, the components of this reducible fiber of the conic bundle are not contained in the branch locus of the double cover $X_{211}\ra R_{211}$. By Table \ref{table: red fibers conic bundle and elliptic},  the induced reducible fiber on the elliptic fibration on $X$ is of type $I_{12}^*$. There are three lines through $P$ which are tangent to a specific cubic. Since $X$ is obtained as double cover of $R$ branched on two smooth fibers (and so on curves which correspond to two smooth cubics in $\mathbb{P}^2$), there are six extra singular fibers on the elliptic fibration on $X$. All of them are of type $I_1$, by Table \ref{table:  extra singular fibers}.

We conclude that the elliptic fibrations on the surface $X_{211}$ are either induced by conic bundles or by the elliptic fibration on the $R_{211}$.

\section{Examples}\label{sec: example}
The aim of this section is to describe explicitly examples of the different types of elliptic fibrations introduced throughout the text. We consider a specific rational elliptic surface, denoted in this section by $R$, and a specific double cover of it, denoted in this section by $X$. Then we explicitly exhibit on $X$: an elliptic fibration induced by a conic bundle on $R$ (see Section \ref{subsec: example conic bunldes}); an elliptic fibration induced by a generalized conic bundle on $\widetilde{R}$ (see Section \ref{subsec: examples generalized conic  bundle}); an elliptic fibration of type 2), induced by a splitting genus 1 pencil on $\widetilde{R}$ (see Section \ref{subsec: example splitting genus 1  pencil}); finally an elliptic fibration of type 3), induced by a non-complete linear system on $\widetilde{R}$ (see Section \ref{subsec:  example type 3) fibration}).

\subsection{The surfaces $R$ and $X$}\label{subsec: example R and X}
Let us consider the rational elliptic surface which is the blow up of $\mathbb{P}^2_{(x:y:z)}$ in the base points of the pencil of cubics 
$$\mathcal{P}_{\alpha}:=V\left(xyz+\lambda(x-y)(y-z)(x-\alpha z)\right).$$
This is a 1-dimensional space of pencils of cubics, and for a generic choice of $\alpha$ the elliptic fibration $\mathcal{E}_R$ associated to this pencil of cubics has 4 singular fibers: one fiber of type $I_6$ on $\lambda=0$, one fiber of type $I_3$ on $\lambda=\infty$ and three fibers of type $I_1$. The Mordell--Weil group of the fibration is $\Z\times\Z/3\Z$. Indeed the rank of the Mordell--Weil group of the fibration can be computed directly by the Shioda-Tate formula and the presence of a section of order 3 follows directly by the following choice of a $\Z$-basis of the N\'eron--Severi group of $R$: Let us denote by $l_1$, $l_2$, $l_3$, $m_1$, $m_2$ and $m_3$ the lines $x=0$, $y=0$, $z=0$, $x=y$, $y=z$ and $x=\alpha z$ respectively. Let $P_{ij}=l_i\cap l_j$, $1\leq i<j\leq 3$. The points $P_{ij}$ are base points of the pencil $\mathcal{P}_{\alpha}$ and each of them has an infinitely near point, which correspond to a tangent direction for the cubics of the pencil. More precisely, all the cubics of the pencil pass through $P_{12}$, $P_{23}$ and $P_{13}$ with tangent directions $m_1$, $m_2$ and $m_3$ respectively. The other 3 base points of the pencil are $Q_{110}=m_1\cap l_3$, $Q_{011}=m_2\cap l_3$ and $Q_{\alpha0-1}=m_3\cap l_3$. We denote by $E_{ij}$ and $F_{ij}$ the exceptional divisors on $R$ which are mapped to the point $P_{ij}$ and are such that $E_{ij}^2=-2$, $F_{ij}^2=-1$, $E_{ij}F_{ij}=1$. Moreover we denote by $E_{011}$, $E_{110}$ and $E_{\alpha0-1}$ the exceptional divisors over the points $Q_{011}$, $Q_{110}$ and $Q_{\alpha0-1}$ respectively.
Then, a $\Z$-basis of the N\'eron--Severi group of $R$ is given by $F_{12}$, $F_{13}$, $E_{011}$, $E_{12}$, $\widetilde{l_1}$, $E_{13}$, $\widetilde{l_3}$, $E_{23}$, $\widetilde{m_2}$, $\widetilde{m_3}$, where $\widetilde{l_i}$ and $\widetilde{m_j}$ correspond to the strict transform of $l_i$ and $m_j$ on $R$.

We observe that the classes $\widetilde{l_i}$ and $E_{ij}$ are the components of the fiber of type $I_6$, the classes $\widetilde{m_j}$ are the components of the fiber of type $I_3$. Chosen $F_{12}$ to be the zero section of the fibration, the height formula (\cite[11.8]{SS}) implies that $F_{13}$ is a 3-torsion section and $E_{011}$ is a section of infinite order.

Let $X$ be the K3 surface obtained as double cover of $R$ branched along a fiber of type $I_6$ and a fiber of type $I_1$. The elliptic fibration $\mathcal{E}_X$ induced on $X$ by $\mathcal{E}_R$ has $I_{12}+2I_3+I_2+4I_1$ as singular fibers and $MW(\mathcal{E}_X)\supset MW(\mathcal{E}_R)=\Z\times \Z/3\Z$. Since the $\rk(MW(\mathcal{E}_X))\geq \rk(MW(\mathcal{E}_R))=1$, and the rank of the trivial lattice of $\mathcal{E}_X$ is 2+11+2+2+1=18, it follows that the Picard number of $X$ is at least 19. On the other hand, the construction of $X$ depends on 1 parameter (the parameter $\alpha$ on which depends the construction of $R$). So the Picard number of $X$ is at most 20-1=19. It follows that the Picard number of $X$ is exactly 19 and that the rank of the Mordell-Weil group of $\mathcal{E}_X$ is exactly 1.

In order to construct a smooth model of $X$ one has to blow up each of the intersection points between the components of the fiber of type $I_6$ on $R$ and the node of the branch fiber of type $I_1$ on $R$, obtaining $\widetilde{R}$. This means that $\beta_1\circ\beta_2:\widetilde{R}\ra\mathbb{P}^2$ blows up each point $P_{ij}$ 4 times: first one introduces an exceptional divisor which has multiplicity 3, then one has to blow up the intersection point between this exceptional component and the strict transforms of the lines $l_i$, $l_j$ and $m_k$, for a certain $k\in\{1,2,3\}$. Hence one introduces 4 divisors $E_{ij}$, $F_{ij}$, $G_{ij}$ and $H_{ij}$ on each point $P_{ij}$. They have the following properties: $E_{ij}^2=-4$, $F_{ij}^2=G_{ij}^2=H_{ij}^2=-1$, $E_{ij}F_{ij}=E_{ij}G_{ij}=E_{ij}H_{ij}=1$. The strict transform on $\widetilde{R}$ of the lines $l_i$ and $m_k$ are the followings:
$$\begin{array}{lll}
\widetilde{l_1}&:=&h-E_{12}-F_{12}-2G_{12}-H_{12}-E_{13}-F_{13}-G_{13}-2H_{13}-E_{011};\\
\widetilde{l_2}&:=&h-E_{12}-F_{12}-G_{12}-2H_{12}-E_{23}-F_{23}-2G_{23}-H_{23}-E_{\alpha 0-1};\\
\widetilde{l_3}&:=&h-E_{13}-F_{13}-2G_{13}-H_{13}-E_{23}-F_{23}-G_{23}-2H_{23}-E_{110};\\
\widetilde{m_1}&:=&h-E_{12}-2F_{12}-G_{12}-H_{12}-E_{110};\\
\widetilde{m_2}&:=&h-E_{23}-2F_{23}-G_{23}-H_{23}-E_{011};\\
\widetilde{m_3}&:=&h-E_{13}-2F_{13}-G_{13}-H_{13}-E_{\alpha0-1}.
\end{array}$$

We denote by $C$ the nodal cubic in the pencil $\mathcal{P}_{\alpha}$ which is in the branch locus of the cover $X\ra \mathbb{P}^2$, by $N$ its node and by $E_N$ the exceptional divisor over this point. The strict transform of $C$ on $\widetilde{R}$ is 
$$\begin{array}{ll}\widetilde{C}=&3h-E_{12}-2F_{12}-G_{12}-H_{12}-E_{23}-2F_{23}-G_{23}-H_{23}+\\&-E_{13}-2F_{13}-G_{13}-H_{13}--E_{011}-E_{110}-E_{\alpha0-1}-2E_N.\end{array}$$
The canonical bundle of $\widetilde{R}$ is 
$$\begin{array}{ll}K_{\widetilde{R}}=&-3h+E_{12}+2F_{12}+2G_{12}+2H_{12}+E_{23}+2F_{23}+2G_{23}+2H_{23}+\\&+E_{13}+2F_{13}+2G_{13}+2H_{13}+E_{\alpha0-1}+E_{110}+E_{011}+E_N.\end{array}$$ 

The surface $X$ admits a cover involution $\iota$, such that $X/\iota=\widetilde{R}$. It fixes the inverse image of the strict transform of the lines $l_i$, $i=1,2,3$, of the curves $E_{ij}$, $1\leq i<j\leq 3$ and of the strict transform of $\widetilde{C}$. In particular $\iota$ fixes 7 rational curves. We observe that the inverse image of the strict transform of the lines $m_i$, $i=1,2,3$ consists, for each $i$, of two disjoint rational curves switched by $\iota$. In particular $\iota^*$ does not act as the identity on $NS(X)$.

Let us fix the following notation on $X$. We denote by $\Theta_i^{(1)}$, $i\in\Z/12\Z$ the components of the fiber of type $I_{12}$, by $\Theta_j^{(h)}$, $j\in\Z/3\Z$ and $h=2,3$ the components of the fibers of type $I_3$ and by $\Theta_k^{(4)}$, $k\in \Z/2\Z$ the components of the fiber f type $I_2$. Moreover we denote by $t_0$ the zero section and we number the components of the reducible fibers in such a way that $t_0$ meets the component $\Theta_0$ of each reducible fiber. The 3-torsion sections will be called $t_1$ and $t_2$. Both $t_1$ and $t_2$ meet the zero component of the fiber of type $I_2$. The non-zero intersections with the other fibers are the following: $t_1\Theta_4^{(1)}=t_1\Theta_2^{(2)}=t_1\Theta_2^{(3)}=t_2\Theta_8^{(1)}=t_2\Theta_1^{(2)}=t_2\Theta_1^{(3)}=1$. 
Given a $2:1$ map $\pi:X\ra \widetilde{R}$, with cover involution $\iota$, we recall that for every curve $D$ in $X$, $\pi_*(D)=\delta(\pi(D))$ where $\delta\in\{1,2\}$ is the degree of the map $\pi$ restricted to $D$. So, for example, we have $\pi_*(\Theta_0^{(1)})=E_{12}$, $\pi_*(\Theta_1^{(1)})=2G_{12}$, $\pi_*(t_0)=2F_{12}$, $\pi_*(\Theta_0^{(2)})=\pi_*(\Theta_0^{(3)})=\widetilde{m_1}$.

\subsection{An elliptic fibration on $X$ induced by a conic bundle on $R$}\label{subsec: example conic bunldes} 

Let us consider on $\mathbb{P}^2$ the pencil of lines through $P_{12}$. Since $P_{12}$ is a base point of the pencil of cubics $\mathcal{P}_{\alpha}$, the pencil of lines through $P_{12}$ on $\mathbb{P}^2$ induces a fibration with rational fibers on $R$.
The class of the strict transform of the lines through $P_{12}$ on $R$ is $B:=h-E_{12}-F_{12}$. We observe that $B^2=0$ and $B\cdot F_R=2$, so  $\varphi_{|B|}:R\ra\mathbb{P}^1$ is a conic bundle on $R$. The curve $E_{12}$ is a section of the conic bundle and the fibers corresponding to the lines $l_1$, $l_2$ and $m_1$ are reducible. Indeed they are reducible fibers (of the conic bundle) of type $A_4$, $A_4$ and $A_3$ respectively (with the notation of Figure \ref{cbred}). The $(-2)$-components of the 2 reducible fibers of type $A_4$ (of the conic bundle) are contained in the fiber $I_6$ (of $\mathcal{E}_R$) and so they are branch curves of the cover $X\ra R$. The $(-2)$-component of the fiber of type $A_3$ (of the conic bundle), i.e. the strict transform of the line $m_1$, is a component of the fiber of type $I_3$ (of $\mathcal{E}_R$) and thus it is not a branch curve for the double cover $X\ra R$.

The conic bundle associated to the pencil of lines through $P_{12}$ induces an elliptic fibration on $X$, as in Proposition \ref{prop: (generalized) conic bundles and  splitting genus 1 pencil induces genus 1 fibration}. The reducible fibers of this fibration can be computed by Table \ref{table: red fibers conic bundle and elliptic} and they are $2I_2^*+I_4$. The line through $P_{12}$ and $N$ (the singular point of the branch fiber of type $I_1$) introduces an extra singular fiber: it corresponds to a smooth fiber of the conic bundle, which passes through a singular point of the fiber $(F_R)_{b_2}$ of type $I_1$ and which meets the other branch fiber, $(F_R)_{b_1}$ in two smooth points. By Table \ref{table:  extra singular fibers} this induces a fiber of type $I_2$ on the elliptic fibration on $X$. Similarly there exist two lines through $P_{12}$ which are tangent to $\widetilde{C}$ in a smooth point (different from $P_{12}$). Each of these lines gives an extra singular fibers on the elliptic fibration on $X$, which is of type $I_1$.

So the elliptic fibration induced on $X$ by the conic bundle of lines through $P_{12}$ has $2I_2^*+I_4+I_2+2I_1$ as singular fibers. Since the sum of the Euler characteristic of these fiber is 24, we know that there are no other extra singular fibers. Moreover the rank of the trivial lattice of an elliptic fibration with fibers  $2I_2^*+I_4+I_2+2I_1$ is 18 and the Picard number of $X$ is 19, so the Mordell Weil rank of this elliptic fibration is one. On the other hand it is known, \cite[Case 2031, Table 1]{Shim}, that the torsion part of the Mordell-Weil group of an elliptic fibration with these reducible fibers is $\Z/2\Z$. Hence the Mordell--Weil group of this elliptic fibration is $\Z\times \Z/2\Z$.

\subsection{An elliptic fibration on $X$ induced by a generalized conic bundle on $\widetilde{R}$}\label{subsec: examples generalized conic bundle}

Let us consider on $\mathbb{P}^2$ the pencil of lines through $N$. Since $N$ is not a base point of the pencil of cubics $\mathcal{P}_{\alpha}$, the pencil of lines through $N$ has a base point on $R$ and therefore does not induce a fibration on $R$. The class of the strict transform of the lines through $N$ on $\widetilde{R}$ is $G:=h-N$. We observe that $G^2=0$ and $G\cdot (-K_{\widetilde{R}})=2$, so $G$ is a generalized conic bundle on $\widetilde{R}$. The map $\varphi_{|G|}:\widetilde{R}\ra\mathbb{P}^1$ is a fibration whose generic fiber is a smooth rational curve. The curves $\widetilde{C}$, $\widetilde{m_1}$, $\widetilde{m_2}$ and $\widetilde{m_3}$ are sections of this fibration. The pullback of the lines connecting $N$ with one of the base points of the pencil $\mathcal{P}_{\alpha}$
are reducible fibers of the fibration $\varphi_{|G|}$. In particular each line connecting $N$ with a point $Q_{abc}$ (where $a,b,c\in\{0,1,-1,\alpha\}$) corresponds to a fiber with two components meeting in a point and each with self-intersection $-1$. Each line connecting $N$ through a point $P_{ij}$ corresponds to a reducible fiber with 5 components: one is the strict transform of the line and has self-intersection $-1$, the others are $E_{ij}$, $F_{ij}$, $G_{ij}$, $H_{ij}$. The strict transform of the line meets in one point $E_{ij}$ and no other components.

The generalized conic bundle associated to the pencil of lines through $N$ induces an elliptic fibration on $X$, as in Proposition \ref{prop: (generalized) conic bundles and  splitting genus 1 pencil induces genus 1 fibration}. The reducible fibers of this fibration  are 3 fibers of type $I_2$ (corresponding to the reducible fibers of $\varphi_{|G|}:\widetilde{R}\ra\mathbb{P}^1$ with two components, i.e. to the lines $\overline{NQ_{abc}}$) and 3 fibers of type $I_0^*$ (corresponding to the reducible fibers of $\varphi_{|G|}:\widetilde{R}\ra\mathbb{P}^1$ with five components, i.e. to the lines $\overline{NP_{ij}}$). The curves $\widetilde{m_i}$ split in the double cover, and thus each $m_i$ corresponds to two sections switched by the cover involution. Hence the elliptic fibration induced on $X$ by the pencil of lines through $N$ on $\mathbb{P}^2$ has $3I_0^*+3I_2$ as singular fibers and the rank of the Mordell--Weil group is 2.

\subsection{An elliptic fibration on $X$ of type 2) induced by a splitting genus 1 pencil}\label{subsec: example splitting genus 1 pencil}
Let us consider the class on $X$ $$D_2:=t_0+\Theta_0^{(1)}+\Theta_1^{(1)}+\Theta_2^{(1)}+\Theta_3^{(1)}+\Theta_4^{(1)}+t_1+\Theta_0^{(4)}.$$
This class has self-intersection 0, is effective and primitive. Thus it is the class of the fiber of a genus 1 fibration $\varphi_{|D_2|}:X\ra\mathbb{P}^1$. The curve $\Theta_5^{(1)}$ is a rational curve which intersects each curve in $|D_2|$ in one point, thus it is a section of the fibration, that can be chosen to be the zero section. We observe that $\iota(D_2)=D_2$, so the fibration $\varphi_{|D_2|}$ is either of type 1) or of type 2).
By the definition of $D_2$ it is clear that the fibration $\varphi_{|D_2|}$ has a fiber of type $I_8$ and the action of $\iota$ on this fiber preserves all the components. Hence $\iota$ does not act as the hyperelliptic involution on the fiber of type $I_8$ (indeed the the hyperelliptic involution acts on a fiber of type $I_8$ fixing the zero component and the fourth-component and switching three pairs of the others).
So the fibration $\varphi_{|D_2|}$ is a fibration of type 2). Let us consider now the push-down of $D_2$ on $\widetilde{R}$:
$$\begin{array}{lll}\pi_*(D_2)&=&2F_{12}+E_{12}+2G_{12}+\widetilde{l_1}+2H_{13}+E_{13}+2F_{13}+\widetilde{C}=\\& &4h-E_{12}-F_{12}-G_{12}-2H_{12}-E_{13}-F_{13}-2G_{13}-H_{13}+\\&&-E_{23}-2F_{23}-G_{23}-H_{23}-2E_{011}-E_{110}-E_{\alpha0-1}-2E_{N}.\end{array}$$
We observe that $(\pi_*{D_2})^2=0$ and $(\pi_*{D_2})(-K_{\widetilde{R}})=0$. So $\pi^*(D_2)$ is a splitting genus 1 pencil on $\widetilde{R}$.  
The class $\pi_*{D_2}$ on $\widetilde{R}$ is mapped by $\beta_1\circ\beta_2$ to the quartics in $\mathbb{P}^2$ which satisfy the following conditions:
they have a node in $Q_{011}$ and a node in $N$; they pass through $P_{12}$ with tangent direction $l_2$, through $P_{13}$ with tangent direction $l_3$, through $P_{23}$ with tangent direction $m_2$ and through $Q_{110}$ and $Q_{\alpha0-1}.$
The quartics which satisfy these conditions have genus 1 (since they are plane quartics with two nodes). The branch locus of $X\ra\widetilde{R}$ consists of the curves $\widetilde{l}_h$ and $E_{ij}$ for $h=1,2,3$ and $0\leq i<j\leq 3$ and $\widetilde{C}$. Since $\pi_*(D_2)\cdot \widetilde{l_h}=0$ for $h=1,2,3$, $\pi_*(D_2)\cdot E_{ij}=0$, $0\leq i<j\leq 3$, and $\pi_*(D_2)\cdot \widetilde{C}=0$, the curves in the linear system $|\pi_*(D_2)|$ do not intersect the branch locus of the double cover and so their inverse images consist, each, of two disjoint copies of each of them, which is consistent with the fact that $\pi_*(D_2)$ is a splitting genus 1 system of curves.

The fibration $\varphi_{|D_2|}$ has one fiber of type $I_8$ (whose components are $t_0$,$\Theta_0^{(1)}$, $\Theta_1^{(1)}$, $\Theta_2^{(1)}$, $\Theta_3^{(1)}$, $\Theta_4^{(1)}$, $t_1$, $\Theta_0^{(4)}$), one fiber of type $I_6$ (whose components are $\Theta_6^{(1)}$, $\Theta_7^{(1)}$, $\Theta_8^{(1)}$, $\Theta_9^{(1)}$, $\Theta_{10}^{(1)}$ and $D_2-(\Theta_6^{(1)}+\Theta_7^{(1)}+\Theta_8^{(1)}+\Theta_9^{(1)}+\Theta_{10}^{(1)})$), two fibers of type $I_2$ (whose components are $\Theta_1^{(2)}$, $D_2-\Theta_1^{(2)}$ and $\Theta_1^{(3)}$, $D_2-\Theta_1^{(3)}$). We observe that $\iota$ preserves the fibers of type $I_8$ and $I_6$ and permutes the fibers of type $I_2$. Thus the action of $\iota$ exhibits $X$ as double cover of a rational elliptic fibration with singular fibers $I_4+I_3+I_2+3I_1$ and the branch fibers of this double covers are the fibers of type $I_4$ and $I_3$. The rational elliptic fibration with fibers of type $I_4+I_3+I_2+3I_1$ has a Mordell--Weil group of rank 2, thus the rank of the Mordell Weil group of the elliptic fibration $\varphi_{|D_2|}:X\ra\mathbb{P}^1$ is at least two. On the other hand, the Picard number of $X$ is 19, the trivial lattice of a fibration with reducible fibers $I_8+I_6+2I_2$ has rank 16 and so the Mordell--Weil rank is 3=19-16. Hence there is at least one section of the rational elliptic fibration with fibers $I_4+I_3+I_2+3I_1$ which splits in the double cover. Indeed the curves $\Theta_0^{(2)}$ and $\Theta_0^{(3)}$ on $X$ are both sections of $\varphi_{|D_2|}:X\ra\mathbb{P}^1$ and are switched by $\iota$, hence both of them are mapped to the same section of the rational elliptic fibration with fibers $I_4+I_3+I_2+3I_1$.

\subsection{An elliptic fibration on $X$ of type 3)}\label{subsec: example type 3) fibration}

Let us consider the following class on $X$: $$D_3:=t_0+\Theta_0^{(2)}+\Theta_2^{(2)}+t_1+\Theta_0^{(4)}.$$
This class has self-intersection 0, is effective and primitive. Thus it is the class of the fiber of a genus 1 fibration $\varphi_{|D_3|}:X\ra\mathbb{P}^1$. The curve $\Theta_0^{(1)}$ is a rational curve which intersects $D_3$ in one point, thus it is a section of the fibration, that can be chosen to be the zero section. We observe that $\iota(D_3)=t_0+\Theta_0^{(3)}+\Theta_2^{(3)}+t_1+\Theta_0^{(4)}\neq D_3$, so the fibration $\varphi_{|D_3|}$ is of type 3). We observe moreover that $D_3\cdot \iota(D_3)=2$. So the generic member of the linear system $|D_3+\iota(D_3)|$ is a smooth curve of genus 3. 

Let us consider now the push down of $D_3$ on $\widetilde{R}$: it is $B_3:=\pi_*(D_3)$ and we observe that $\pi^*(B_3)=D_3+\iota(D_3)$:
$$\begin{array}{lll}B_3:=\pi_*(D_3)&=&2F_{12}+\widetilde{m_1}+\widetilde{m_3}+2F_{13}+\widetilde{C}=\\& &5h-2E_{12}-2F_{12}-2G_{12}-2H_{12}-2E_{13}-2F_{13}-2G_{13}-2H_{13}+\\&&-E_{23}-2F_{23}-G_{23}-H_{23}-2E_{110}-E_{011}-2E_{\alpha0-1}-2E_{N}.\end{array}$$

The class $B_3$ has the following properties $B_3^2=2$ and $B_3\cdot (-K_{\widetilde{R}})=2$. Since the generic member of the linear system $|B_3|$ is smooth, by adjunction it has genus 1, as expected. The curves in $|B_3|$ intersect the branch curves of the double cover $X\ra \widetilde{R}$ in the following way: $B_3\cdot \widetilde{l_i}=B_3\cdot \widetilde{C}=B_3\cdot E_{23}=0$, $B_3\cdot E_{12}=B_3\cdot E_{13}=2$. So the curve $B'$ in $X$ which is the double cover of a generic member of $|B_3|$ is a double cover of a curve of genus 1, branched in 4 points, hence it is a curve of genus 3 as expected. The sublinear system of $|B_3|$ given by the curves which meet $E_{12}$ in one point with multiplicity 2 and $E_{13}$ in one point with multiplicity 2, corresponds to the curves which split in the double cover and thus to the curves whose inverse image is the sum of a curve in $|D_3|$ and a curve in $|\iota(D_3)|$.

The class $B_3$ on $\widetilde{R}$ is mapped by $\beta_1\circ\beta_2$ to the quintics in $\mathbb{P}^2$ which satisfy the following conditions:
they have a node in $P_{12}$, $P_{13}$, $Q_{110}$, $Q_{\alpha0-1}$, $N$; they pass through $P_{23}$ with tangent direction $m_2$ and $Q_{011}$.
The quintics which satisfy these conditions have genus 1 (since they are plane quintics with 5 nodes).

The fibration $\varphi_{|D_3|}$ has one fiber of type $I_5$ (whose components are $t_0$, $\Theta_0^{(2)}$, $\Theta_2^{(2)}$, $t_1$, $\Theta_0^{(4)}$), one fiber of type $I_4$ (whose components are $\Theta_1^{(1)}$, $\Theta_2^{(1)}$, $\Theta_3^{(1)}$ and $D_3-\Theta_1^{(1)}-\Theta_2^{(1)}-\Theta_3^{(1)}$), one fiber of type $I_8$ (whose components are $\Theta_5^{(1)}$, $\Theta_6^{(1)}$, $\Theta_7^{(1)}$, $\Theta_8^{(1)}$, $\Theta_9^{(1)}$,$\Theta_{10}^{(1)}$, $\Theta_{11}^{(1)}$ and $D_3-\Theta_5^{(1)}-\Theta_6^{(1)}-\Theta_7^{(1)}-\Theta_8^{(1)}-\Theta_9^{(1)}-\Theta_{10}^{(1)}-\Theta_{11}^{(1)}$) and one fiber of type $I_2$ (whose component are $\Theta_1^{(3)}$ and $D_3-\Theta_1^{(3)}$). The classes $\Theta_0^{(1)}$, $\Theta_4^{(1)}$, $\Theta_0^{(3)}$, $\Theta_2^{(3)}$ and $t_2$ are sections of the fibration.

\end{document}